
\documentclass[journal,twocolumn,10pt,letter]{IEEEtran}

\IEEEoverridecommandlockouts
\ifCLASSINFOpdf
\else
\fi

\usepackage{bbold}
\usepackage{amsfonts}
\usepackage{nonfloat}
\usepackage{amssymb}
\usepackage[cmex10]{amsmath}
\usepackage{amsthm}
\usepackage{amssymb}
\usepackage{mathrsfs}
\usepackage{amsbsy}
\usepackage{setspace}
\usepackage{graphicx}
\usepackage{newclude}
\usepackage{latexsym}
\usepackage{lettrine} 

\usepackage{tabularx}





\usepackage{amsthm}

\newtheoremstyle{named}{}{}{\itshape}{}{\bfseries}{.}{.5em}{\thmnote{#3's }#1}
\theoremstyle{named}



\usepackage{amsthm}

\theoremstyle{definition}

\newtheorem{definition}{Definition}

\usepackage[table]{xcolor}
\usepackage{multirow}
\usepackage{rotating}
\usepackage{booktabs}
\usepackage{bbm} 
\usepackage{amsmath,epsfig,cite,amsfonts,psfrag}
\usepackage{lipsum}
\usepackage{cuted}
\usepackage{color}
\usepackage{epstopdf}
\usepackage{float}
\usepackage{mathtools}
\usepackage{bbm}
\usepackage{atbegshi}
\usepackage{balance}

\usepackage[utf8]{inputenc}
\usepackage{multirow}



\usepackage{accents}
\newlength{\dhatheight}


\allowdisplaybreaks

\newtheorem{lemma}{Lemma}

\graphicspath{{figures/}}
\allowdisplaybreaks

\makeatletter
\floatstyle{plain}
\newfloat{twocolequfloat}{b}{zzz}
\floatname{twocolequfloat}{Equation}
\newtheorem{Theorem}{Theorem}
\newtheorem{Remark}{Remark}

\makeatother

\begin{document}




\title{ Indoor Planning of Optical Wireless Networks for  LoS Condition in Access and Backhauling\vspace{0mm} }


    \author{\IEEEauthorblockN{Mohsen Abedi, Alexis A. Dowhuszko,~\IEEEmembership{Senior Member,~IEEE}, and Risto Wichman}
    
    \IEEEauthorblockA{Department of Information and Communications Engineering (DICE), \\ Aalto University, 02150 Espoo, Finland\\
    E-mails: \{mohsen.abedi, alexis.dowhuszko,  risto.wichman\}@aalto.fi} \vspace{-10mm}
    }
\maketitle
\pagestyle{empty}

\begin{abstract}
Optical wireless technology has the potential to complement the wireless access services provided so far over RF. Apart from the abundant unlicensed  bandwidth available for ultra-dense deployments over optical wireless bands, optical wireless also has the potential to offer inexpensive, private, secure, and environmentally friendly communications. However, the main challenge of this technology is the inability to pass through obstacles, requiring a Line-of-Sight~(LoS) condition between transmitter and receiver. In addition, when LEDs are used to provide simultaneously wireless access and illumination, the range of the optical wireless links is notably limited. Since the typical size of Visible Light Communications~(VLC) cells is in the order of few meters, it is challenging to plan the detailed deployment of Access Points~(APs) to prevent coverage holes. This paper proposes a \emph{graph modeling} approach for identifying the minimum number of APs (and their locations) for the given indoor floor plan. A \emph{connectivity tree} is considered to ensure that each VLC AP can communicate with (an)other AP(s) through a LoS infrared wireless link for backhauling. The presented deployment procedure can also control the co-channel interference that is generated throughout the entire indoor environment, enhancing the data rate and illumination performance of VLC networks simultaneously.

\thispagestyle{empty}
\pagestyle{empty}
\end{abstract}

\vspace{0mm}

\noindent \begin{keywords}
     Visible Light Communications; Line-of-Sight; network planning; range-constrained cells; access point deployment; optical wireless access; optical wireless backhaul.
\end{keywords}

\vspace{-2mm}
\section{Introduction}
\vspace{0mm}


Wireless communications beyond sub-6\,GHz Radio Frequency~(RF) bands, including millimeter waves~(mmWave), Tera-Hertz~(THz), and optical wireless (infrared and visible light) bands~\cite{rapp2019}, are increasingly being considered as a promising option to enable wireless access connectivity beyond 5G. The use of these radio and optical wireless bands will enable a new set of indoor applications involving industrial control processes, connected robotics and autonomous vehicles, mission-critical communications, and connectivity in medical and healthcare applications, among others~\cite{saad2020}. When compared to radio communications on lower frequency bands, mmWave/THz radio and optical wireless technologies offer several advantages, such as wider bandwidths and extremely fast data transfers, the possibility to support a higher density of devices per unit area, enhanced security and reduced interference susceptibility due to the dominance of Line-of-Sight~(LoS) propagation conditions~\cite{zhang2022mmwave,moon20226g, obeed2019optimizing}. Among these frequency bands, Visible Light Communications~(VLC) stands out due to its ability to provide communications as a service on top of the illumination. Apart from the sustainability that emerges thanks to the double use that is given to existing illumination infrastructure, VLC also simplifies notably the co-channel interference confinement, with the potential to enable ultra-densification of indoor deployments~\cite{yadav2018all}.

Despite the potential benefits VLC networks offer, they also experience limitations that impact their ability to provide reliable wireless access everywhere. This is because, in the presence of obstructing (opaque) objects such as walls, doors, and even curtains, the propagation of the VLC signal can be easily blocked between the Light-Emitting Diode~(LED) and the Photodetector~(PD) that takes the role of transmitter and receiver, respectively. Although communication in a VLC system using indirect illumination is possible~\cite{dowh2020a}, the data rate that is feasible is notably affected because the received signal power of a Non-Line-of-Sight~(NLoS) link is much weaker when compared to a LoS link; reason for this is that specular reflections seldom take place on VLC systems, introducing strong attenuation when reflections take place on the objects that are typically found indoors~\cite{matheus2019visible}. Additionally, the optical power that reaches the sensitive area of the VLC receiver falls abruptly to zero when its position falls outside the Field-of-View~(FoV) of the PD~\cite{tabassum2018coverage}, restricting the coverage of VLC cells. 
Due to that, detailed VLC network planning is required, which is a straightforward process when the service area of the VLC network extends horizontally without bounds~\cite{borj2020} but becomes more complicated in the presence of walls that block the propagation of VLC signals. 
So far, little research has been conducted on the
deployment of VLC Access Points~(APs) to enable seamless indoor wireless access  \cite{abedi2021visible}.

When ultra-densification is applied to provide good quality wireless access in indoor environments, the implementation of the backhaul links to forward the data from/to the core network remains an open challenge regardless of the frequency band that is used. The backhauling technology for indoor wireless communication networks can be either wired or wireless. Fiber optics and copper twisted-pair cables are examples of wired backhauling, with fiber optics being the fastest option \cite{alzenad2018fso,ni2013indoor}. Although the use of a wired backhaul can be considered a reliable way to ensure a wide bandwidth and, as a guided medium, can notably mitigate the interference originating from other co-located systems, it is not cost-effective due to installation and maintenance difficulties. Moreover, it also has limited flexibility to adapt to modifications when optimizing the VLC network after it becomes operative, particularly in the case of ultra-dense deployments. 

 Wireless backhauling offers lower installation and maintenance costs, better flexibility, and faster deployment. Here, RF-based backhauling technologies such as 4G (LTE) or 5G (NR) would be the easiest option to use due to their level of maturity. Still, they would have (relatively) limited bandwidth to offer for this purpose, especially in presence of ultra-dense VLC deployments. Therefore, \emph{optical wireless} backhauling over Infrared~(IR) bands is considered in this paper as an attractive low-cost technology to enable wide bandwidth, high reliability, and low latency communication channel when interconnecting VLC APs (using either highly-directive LEDs or low-power Laser diodes)~\cite{kazemi2018wireless, alzenad2018fso}. Moreover, several publications on VLC networking use Power Line Communications~(PLC) as a sustainable option for backhauling, reusing the star-like topology of the existing electric power transmission cables in buildings for communications. However, apart from the challenges that emerge when transmitting information over a medium that has not been optimized for this purpose, powerline cables behave like a leaky-antenna that irradiates (and receives) a tremendous amount of interference~\cite{yaacoub2021security}.

 Assuming that a VLC AP is not range-constrained for communications, determining the minimum number of these LED-based nodes and their specific locations to provide LoS wireless access within an indoor area confined by walls would be similar to \emph{art gallery problem}, which is known to be NP-hard \cite{de1997computational}. The aim of this problem is to determine the minimum number of camera guards and their locations such that every point of an arbitrary layout is visible by at least one camera guard. Several heuristic solutions  have been proposed to the art gallery problem with a focus on the vertices of the layout, the intersections between edge extensions, and the center points of polygons~\cite{amit2010locating}. Solutions involving the iterative weighting of large sets of grid points~\cite{efrat2006guarding} have been also proposed. However, these heuristic solutions do not consider the requirements of optical wireless communications networks, which include a limited maximum range of a VLC cell and the necessity of a backhaul to connect the VLC APs to the core network. Note that when backhauling is implemented over Optical Wireless~(IR) technologies, the LoS condition between neighboring VLC APs must also be guaranteed. 


In this paper, we address the detailed planning of the positions that (minimum number of) VLC APs should take to guarantee a LoS condition for optical wireless access in the indoor service area. With this process, every location inside the floor plan should have visibility to at least one VLC AP. To achieve this task, we first partition the indoor service area and then model it with a graph whose nodes are the subsets of the partition, and the edges indicate the existence of a common visibility area. After that, the \emph{minimum clique cover} method is used to cluster the nodes, minimizing the number of VLC APs and identifying their positions.  
The LoS condition requirement among VLC APs, which enables the configuration of a tree-like network topology for backhauling connectivity, has also been considered. 
By using our derived lower bounds, it is possible to verify the optimality of the deployment methods under study. Finally, the proposed deployment methods are compared with conventional approaches, highlighting their advantages in terms of the number of VLC APs, achievable data rate, and illumination key performance indicators.
\subsection{Organization}
 Section~\ref{sec:2} presents the system model for the optical wireless network and introduces the detailed planning problem that needs to be tackled. Section~\ref{sec:3} proposes an equivalent graph model for the indoor service areas and, based on this model, Section~\ref{sec:4} derives the deployment strategy of VLC APs to ensure LoS coverage in the wireless access link. In Section~\ref{sec:5}, a tree structure is employed to ensure that the visibility (LoS) condition is observed in the backhaul link that interconnects neighboring VLC APs. Section~\ref{sec:6} presents simulation results and, finally, Section~\ref{sec:7} draws the conclusions.

\subsection{Notations}
 An angle is shown by a caret, and points and polygons on the plane are denoted by capital letters and lower case letters, respectively. Furthermore, $\|.\|$ indicates the Euclidean distance, $\lfloor . \rfloor$ is the floor function, and $|.|$ denotes the size of a set or the amplitude of a complex value.


\begin{figure}[!t]
\centering
\hspace{0cm}\includegraphics[width=9cm]{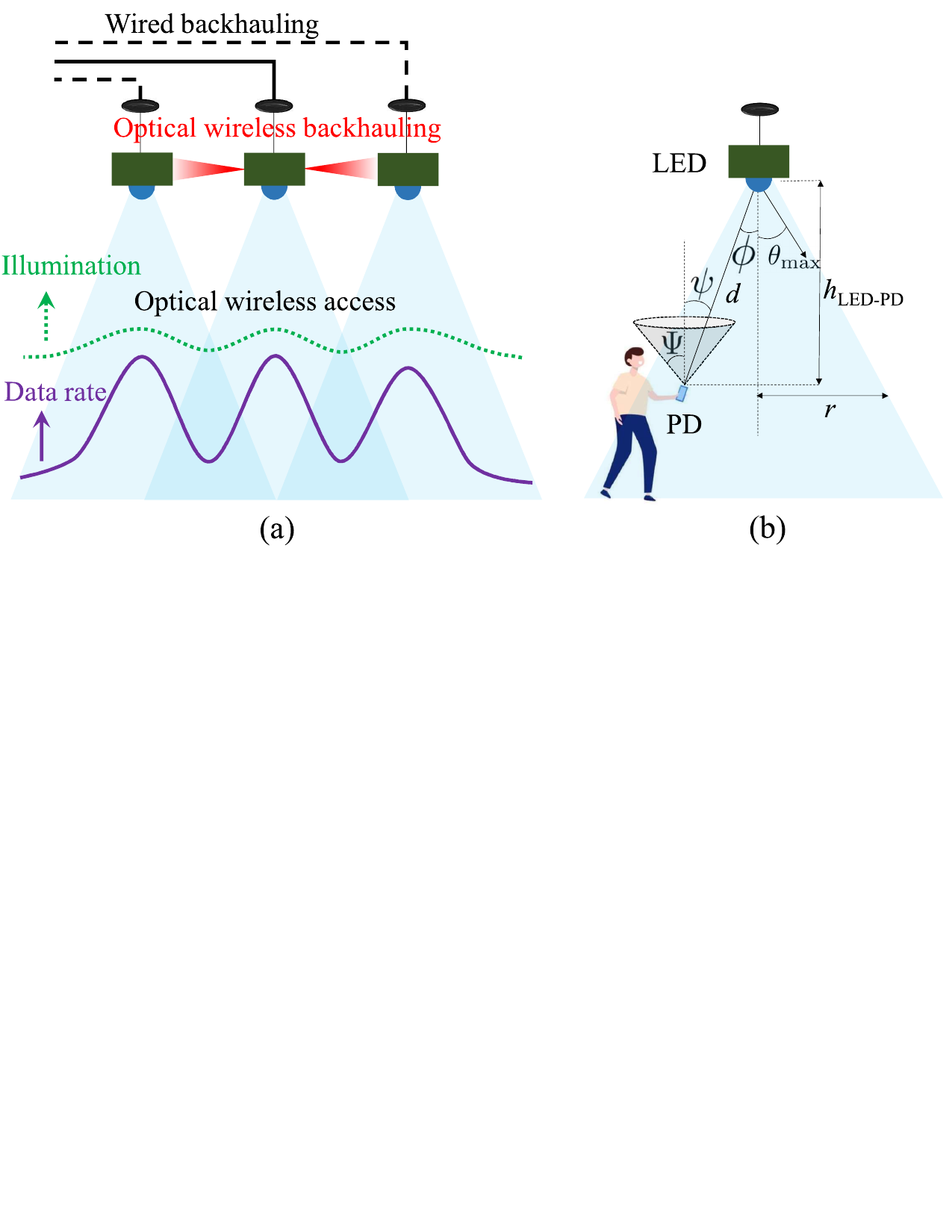}
    \vspace{-6.8cm}\caption{  Optical wireless communication network overview: a) System model in which VLC APs are interconnected with optical wireless point-to-point infrared links (in red) and provide wireless access over visible light bands (blue). b)  VLC channel model between the visible light LED and the PD.}\label{fig:led}
\vspace{-0.4 cm}
\end{figure}

\color{black}{}

\vspace{-2mm}
\section{System Model and Problem Statement}
\label{sec:2}

An optical wireless network is modeled as a set of LED-based APs that transmit data over visible light signals to enable optical wireless access \cite{ghassemlooy2019optical,karunatilaka2015led,boucouvalas2015standards}. Aside from the innate benefits that optical wireless networks offer, such as wireless security and higher bandwidth, such technology has strong potential for sustainability as it  provides data connectivity as a service on top of illumination for indoor environments.
Fig. \ref{fig:led}(a) illustrates the spatial distribution of illumination, which changes more smoothly than the receiving data rate \cite{dowh2017,dowh2020a} in the absence of interference management. In other words, data rates are significantly reduced near cell boundaries and, if this data rate is higher than a target minimum data rate in the whole service area, the cell-range of the VLC system can be mapped into a maximum cell radius $r$, see Fig.~\ref{fig:led}
(b). 

In light of the backhauling technology, Fig. \ref{fig:led}(a) illustrates that a network with wired backhaul links (solid and dashed black lines) is complex and impractical for ultra-dense deployments. 
 As an alternative, optical wireless links with infrared LEDs (or low-power Laser diodes) can be used to connect to nearby VLC APs, which in turn relay this data to other VLC APs until a mesh network is defined for backhaul connectivity. 
Accordingly, reliable optical wireless backhaul requires LoS conditions between VLC APs, either directly or through intermediate nodes.

Indoor VLC AP deployments present significant challenges due to the complexity of the shapes of indoor environments, the presence of obstacles, and the need for reliable, high-speed, and seamless optical wireless access. A number of factors complicate indoor deployment of VLC APs, including the complexity of the floor plan (layout), the range of single VLC cells, and the need for backhauling connectivity that becomes very challenging in ultra-dense deployments.
 With the aid of the VLC channel model, we can determine the maximum range of a VLC AP to achieve a target data rate on the cell-edge areas of the VLC network.   Once we have determined the maximum range, we identify conventional and effective solutions for indoor deployments that provide valuable insights for the detailed planning of a VLC network.
 
\subsection{VLC channel model and optical wireless backhauling}
 To model VLC channels, we use a direct illumination framework as the direct optical signal is much stronger than the reflections \cite{dowh2020a}. 
Here, phosphor-converted LEDs radiate according to Lambertian radiation patterns, as shown in Fig.  \ref{fig:led}(b). Let $K\geq 1$ be the number of LED transmitters LED$-1$, LED$-2$,..., and LED$-K$ that have direct link (visibility) to a PD within the layout. 
 The DC gain of the optical channel between the
LED$-k$ and the PD receiver is given by
\begin{equation}
H_{k,{\rm PD}}^{\rm dir} \hspace{-0.5mm} = \hspace{-0.5mm} \left\{ \hspace{-2mm} \begin{array}{ll}
\frac{(m \hspace{-0.5mm} + \hspace{-0.5mm} 1) A_{\rm PD}}{2 \pi {d_k}^2} \cos^{m} (\phi_k) \cos{(\psi_k)},     &  0 \hspace{-0.5mm} \le \hspace{-0.5mm} \psi_k \hspace{-0.5mm} \le \hspace{-0.5mm} \Psi, \\
   \quad \quad    \quad \quad    \quad \quad 0,  &  \psi_k> \Psi,
\end{array} \right.
\label{eq:2.1}
\end{equation}
where $d_k$\,[m] is the Euclidean distance between LED$-k$ and PD, and $m=-1/\log_{\rm 2}\big[ \cos ( \theta_{\max}) \big]$ denotes the Lambert index of LEDs, in which $\theta_{\max}$\,[rad] defines the source radiation semi-angle at half power. Besides, $\phi_k$\,[rad] and $\psi_k$\,[rad] refer to the angle of irradiance and incidence of the LoS link, respectively, between LED$-k$ and PD. Furthermore, $\Psi$\,[rad] denotes the FoV semi-angle of the PD with an effective physical area of $A_{\rm PD}$\,[m$^2$]. 
Thus, this VLC network has a maximum cell range of $r=h_{\text{LED-PD}} \times \text{tan}(\Psi)$, where $h_{\text{LED-PD}}$ is the vertical distance between LEDs and PD. So, a PD located beyond this range will receive no signal from the LED.

\vspace{0 cm}
\begin{figure}[!t]
\centering
\hspace{0cm}\includegraphics[width=8.6cm]{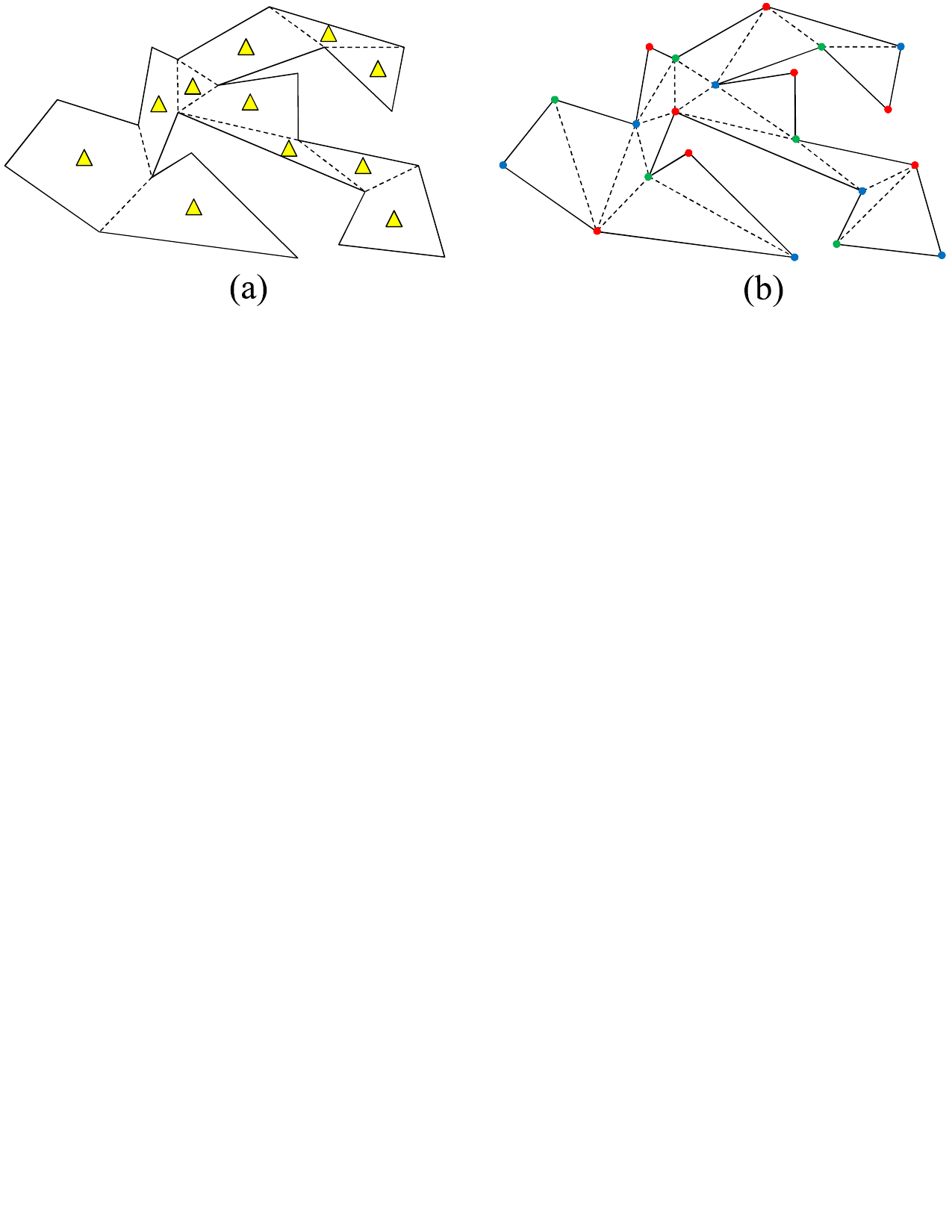}
    \vspace{-8.7cm}\caption{\ Conventional solutions to the art gallery problem: a) Convex partitioning  by adding non-intersecting diagonals and b) 3-coloring method. }\label{convex-3coloring.eps}
\vspace{-0.6 cm}
\end{figure}

The spectral optical power that reaches the PD from LED$-k$ at wavelength $\lambda$ is given by
\begin{equation}
p_{{ k},{\rm PD}}^{\rm dir}(\lambda) = P_{k} \, H_{k,{\rm PD}}^{\rm dir}  \, S_{\rm o}^{\rm (W)} (\lambda) ,
\label{eq:2.2}    
\end{equation}
where $P_{k}$ and $S_{\rm o}^{\rm (W)}(\lambda)$ are the total radiant power and spectral power distribution of LED$-k$, respectively. Then, the DC current at the output of the PD is given by
\begin{equation}
i_{k,{\rm PD}}^{{\rm dir}} = \int_{\lambda_{\rm l}}^{\lambda_{\rm u}} p^{{\rm dir}}_{k,{\rm PD}}(\lambda) \, \text{R}_{\rm PD}(\lambda) \, f_{\rm o}(\lambda) \, d\lambda,
\label{eq:2.3}    
\end{equation}
where $\text{R}_{\rm pd}(\lambda)$ denotes the responsivity of the PD and $f_{\rm o}(\lambda)$ is the transmittance of the optical passband filter with lower ($\lambda_{\rm l}$) and upper ($\lambda_{\rm u}$) cutoff wavelengths in the visible light region. 
We then derive the received optical signal strength from LED$-k$ at the PD as
\begin{equation}
\Lambda_{k,{\rm PD}}  =  \big|  i_{k,{\rm PD}}^{\rm dir} \, G_{\rm tia}  \big|^2,
\label{eq:2.4}    
\end{equation}
where $G_{\rm tia}$ [V/A] is the gain of the Transimpedance Amplifier~(TIA)  embedded into the PD. 

In VLC, the transmitted signals are real and non-negative. Furthermore, the peak power of the transmitters is limited so that capacity-achieving distributions are known to be discrete and to depend on SNR \cite{Smith1971}. Upper and lower bounds for VLC capacity have been derived, e.g., in  \cite{Lapidoth2009} and algorithms to find discrete input distributions have been proposed in \cite{Ma2021}. Here, we simply approximate the data rate received at the PD by 
\begin{equation}
R_{\text{data}}=B~\text{log}_{\rm 2}\bigg(1+\dfrac{\Lambda_{1,{\rm PD}}}{\sum_{k=2}^K \Lambda_{k,{\rm PD}}+\sigma^2} \bigg),
\label{eq:data-rate}    
\end{equation}
where $B$ and $\sigma^2$ refer to LED modulation bandwidth and optical noise power, respectively. With no loss of generality in this equation, we assumed that the PD receives data from LED$-1$ while the signals from the rest of visible LEDs are considered to be interference. In contrast, the illumination [Lx] per squared meter is given by
 \begin{multline} 
E_{\rm v}= 683 \bigg[ \frac{\rm lm}{\rm W} \bigg]  \int_{\lambda_l}^{\lambda_u}\frac{ \sum_{k=1}^K p_{k,{\rm PD}}^{\rm dir}(\lambda) \, V(\lambda) }{A_{PD}} d\lambda = \\ \frac {\mu_{\rm LED}  \int_{\lambda_l}^{\lambda_u} \sum_{k=1}^K p_{{k},{\rm PD}}^{\rm dir}(\lambda) \,d\lambda }{A_{PD}},
\label{eq:illumination}  
\end{multline}  
 where function $V(\lambda)$ models the human eye sensitivity with tabulated values appear in~\cite{schu2006}, and $\mu_{\rm LED}$ [lm/W] is the luminous efficacy of the white light emitted by the LEDs.
 




\vspace{-2mm}
\subsection{Indoor deployment of LEDs with unconstrained range for LoS coverage}
\label{sec:2b}
\vspace{0mm}

The indoor deployment of LED-based APs for LoS coverage presents a challenge similar to the well-known art gallery problem, particularly when assuming an unconstrained range ($r=\infty$) for the VLC cells. A single camera guard (or VLC AP) with unlimited range can cover any point within a convex layout. However, when dealing with non-convex layouts, such as the one with $n=21$ vertices depicted in Fig.~\ref{convex-3coloring.eps}, multiple guards are required for LoS coverage~\cite{paalsson2008camera}. 


\vspace{0 cm}
\begin{figure}[!t]
\centering
\hspace{0cm}\includegraphics[width=8.7cm]{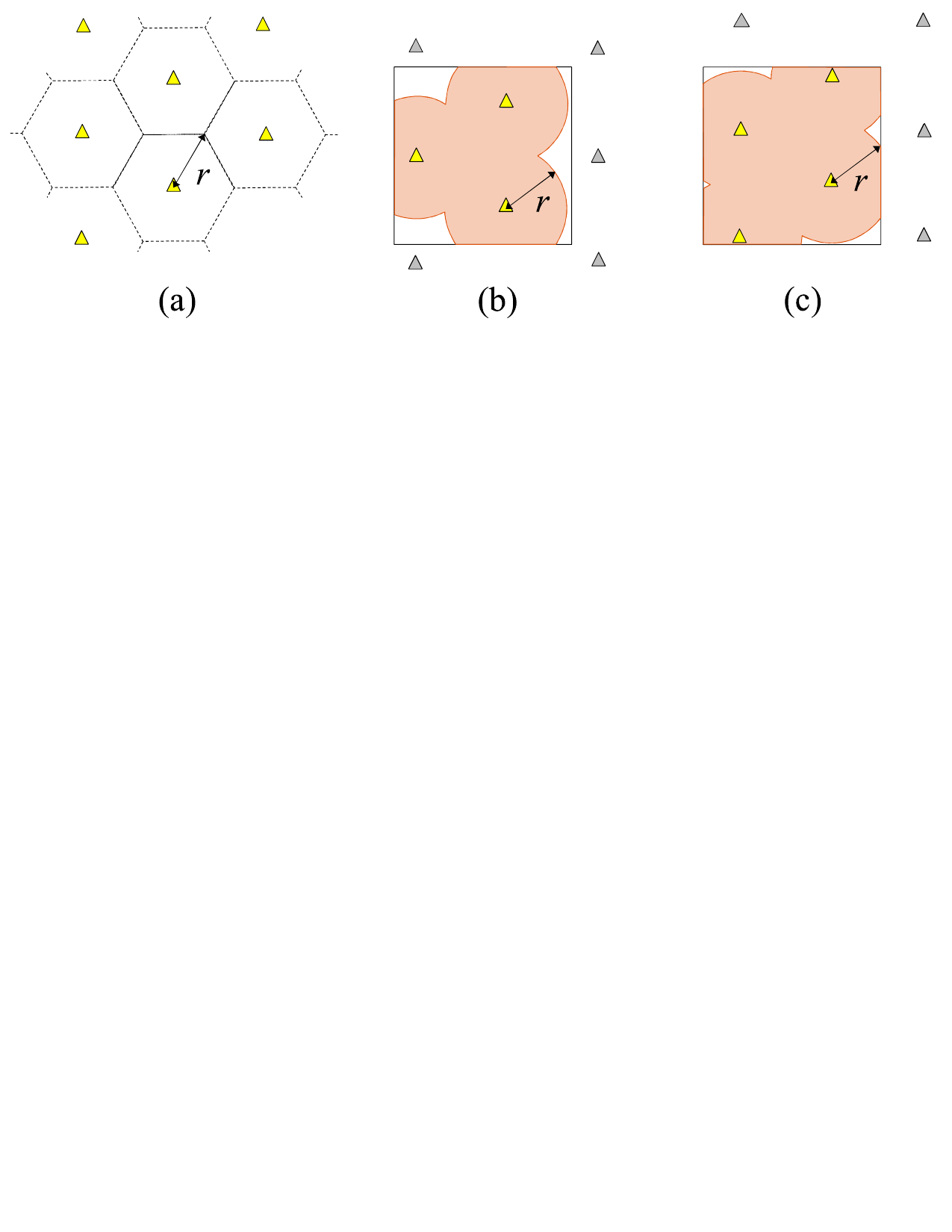}
    \vspace{-8.7cm}\caption{ Conventional deployment of VLC APs with limited range: 
    a) Hexagonal cells in a boundless area without obstacles;
    b) Square indoor area  capturing three VLC cells;  
    c) Square indoor area with four VLC cells. } \label{fig:squar-hexagonal}
\vspace{-0.2 cm}
\end{figure}


The conventional solutions for the art gallery problem involve Convex Partitioning~(CP) methods and 3-coloring. CP methods aim at reducing a given layout into the fewest number of convex components.  Fig. \ref{convex-3coloring.eps}(a) depicts a CP method that adds the smallest set of non-intersecting diagonals~\cite{chazelle1994decomposition}. Here, placing one camera guard  on the center point of each convex component results in $11$ guards to cover the whole sample layout, respectively. Chvatal suggested that the upper bound for the minimum number of guards required is $\big\lfloor n/3 \big\rfloor$, which Fisk then verified by demonstrating that the vertices of every layout are 3-colorable~\cite{de1997computational}. With this proof, each vertex is labeled with one of three different colors such that no adjacent vertices share the same color. Thus, placing guards on vertices with the minimum number of color uses, such as six green vertices in  Fig. \ref{convex-3coloring.eps}(b), provides full coverage of the layout. However, the art gallery problem does not address the unique requirements of optical wireless networks, which are limited range and the requirement of optical wireless backhauling.


\vspace{0 cm}
\begin{figure}[!t]
\centering
\hspace{0cm}\includegraphics[width=9cm]{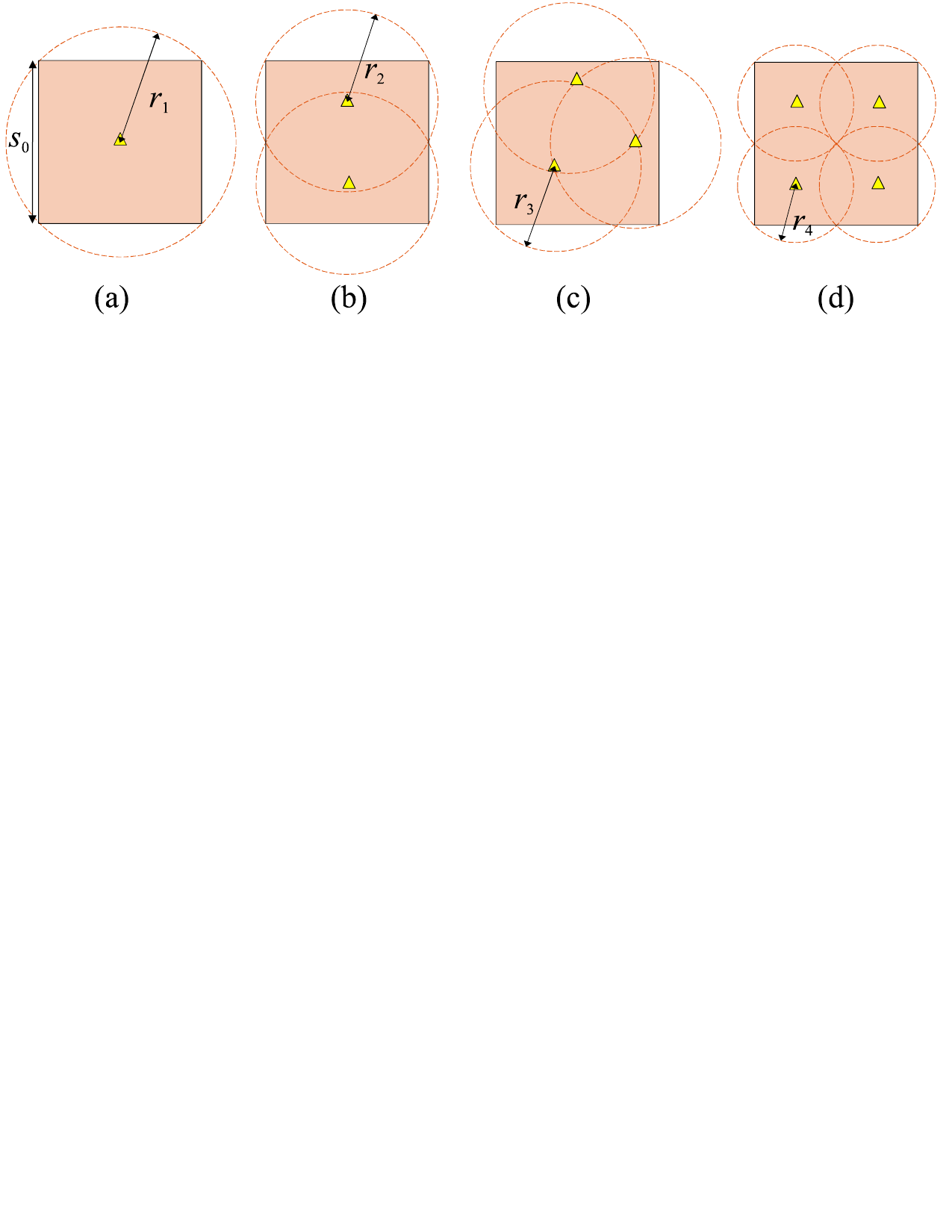}
    \vspace{-9cm}\caption{ LoS coverage of the square-shaped indoor area with side length $s_0$ by optimal deployment of: a) A single VLC AP with coverage range $r_1=\sqrt{2}s_0/2$; b) two VLC APs with coverage range  $r_2=\sqrt{5}s_0/4$; c) three VLC APs with coverage range $r_3= (\sqrt{6}-\sqrt{2})s_0/2$; and d) four VLC APs with coverage range $r_4=\sqrt{2}s_0/4$.} \label{fig:squar-optimal}
\vspace{-0.5 cm}
\end{figure}

\vspace{-2mm}
\subsection{Deployment of LEDs with limited range for LoS coverage}
\label{sec:2bb}
\vspace{0mm}
In a boundless area without obstacles, hexagonal  cells are the most efficient architecture for deploying VLC APs with limited range, just like RF-based mobile networks deploy macrocells to provide coverage outdoors. As shown in  Fig. \ref{fig:squar-hexagonal}(a), hexagonal cells  ensure LoS coverage by deploying the minimum number of VLC APs. However, when it comes to a limited-sized area with walls, as in Fig. \ref{fig:squar-hexagonal}(b), deploying LEDs becomes particularly challenging.
\emph{Hex deployment} refers to the method of shifting the square room over the hexagonal cells to maximize the coverage area, as shown in Fig. \ref{fig:squar-hexagonal}(c), which may still result in outage areas.



For the square room with side length $s_0$ as in Fig.~\ref{fig:squar-optimal}(a), deploying a single VLC AP in the center with maximum range $r=\sqrt{2}s_0/2$ or larger is enough for a LoS coverage. Furthermore, as in Fig.~\mbox{ \ref{fig:squar-optimal}(b)}, two VLC APs with a maximum range  $\sqrt{5}s_0/4\leq r<\sqrt{2}s_0/2$ are enough to cover the area. Interestingly, Fig. \ref{fig:squar-optimal}(c) illustrates that when range verifies $(\sqrt{6}-\sqrt{2})s_0/2 \leq r < \sqrt{5}s_0/4 $, three VLC APs are necessary and sufficient for a LoS coverage. Lastly, four VLC APs are needed when range is limited to $\sqrt{2}s_0/4 \leq r< (\sqrt{6}-\sqrt{2})s_0/2$. 
As a result, since it is not straightforward to deploy VLC APs to provide LoS coverage in a square room, it is expected that irregular layouts will further complicate this task. Furthermore, when considering the optical wireless backhauling, the optimal VLC AP deployment requires more sophisticated algorithms. So, we begin by developing a graph model for indoor areas. With the aid of this model, we can determine the optimal placement of VLC APs to ensure a reliable optical wireless access. 





\vspace{-2mm}
\section{Equivalent Graph Model for  an Indoor Area}
\label{sec:3}
\vspace{0mm}
In order to construct an equivalent optimization problem, we characterize a visibility graph as a tool for modeling the geometrical characteristics of a service area layout.

 Given a point $X$, a polygon $p$, and an area $\mathcal{A}$, we first present underlying definitions.

\begin{definition}\label{point_visibility.def}
  $\bf{Visibility~area~of~a~ 
 ~point}$ $\mathcal{V}(X)$ denotes the set of all the points $Y$ such that:
 \begin{itemize}
     \item The line segment $XY$ lies entirely inside the layout, and
     \item $\|XY\|\leq r$.
 \end{itemize}\end{definition}So, there is no edge between $X$ and $\mathcal{V}(X)$, and $\mathcal{V}(X)$ is within the maximum range from $X$.
 
   \begin{definition}\label{polygon_visibility.def}
 $\bf{Visibility~area~of~a~
 ~polygon}$ $\mathcal{V}(p)$ refers to the set of all the points $Y$, such that $Y\in\mathcal{V}(X_i)$ for $i=1, 2,..., l$, where $X_1, X_2, ..., X_l$ denote the vertices of $p$.
\end{definition}
From Definitions \ref{point_visibility.def} and \ref{polygon_visibility.def}, it follows immediately that  $\mathcal{V}(p)=\mathcal{V}(X_1)\cap\mathcal{V}(X_2)... \cap \mathcal{V}(X_l)$. There is, however, still a need for area-to-area mapping in indoor environments.

 \begin{definition}\label{Connectivity_region.def}
 $\bf{Connection~Region~of~an 
 ~area}$ $\mathcal{CR}(\mathcal{A})$  corresponds to the locus $Y$ for which there is at least one point $X\in\mathcal{A}$ such that the segment $XY$ entirely lies inside the layout.
\end{definition}

Given the point $X$ and the polygon $p$, Fig.~\ref{fig-visibility.pdf}(a) visualizes $\mathcal{V}(X)$  and $\mathcal{V}(p)$ in red and yellow, respectively, with the maximum range $r$. It shows that 
all vertices of $p$ are openly visible and lie within $r$ of any point in $\mathcal{V}(p)$. Further, given the area $\mathcal{A}$, Fig.~\ref{fig-visibility.pdf}(b) indicates $\mathcal{CR}(\mathcal{A})$ in green, whose points are openly visible by at least one point in $\mathcal{A}$.

  Partitioning the layout is the first step in creating an equivalent graph model.  There is a significant impact of partitioning on the complexity of the graph, as well as on optimization problems that result from it.  In terms of layout parts, triangles are particularly attractive among all types of polygons due to their convexity and ability to partition any layout.
Triangulation  refers to the process of partitioning a layout into triangles by adding diagonals. However, to meet the requirements of optical wireless networks, these triangles should be smaller than in the triangulation process.

 \begin{definition}\label{hyper-triangulation.def} $\bf{Hyper~Triangulation~method}$  $\mathcal{HT}(R)$ begins with the triangulation process and continues by connecting the midpoint of the largest side at each triangle to its opposite vertex, until no triangle with a side that is larger than a target parameter $R$ is left.
\end{definition}

\vspace{0 cm}
\begin{figure}[!t]
\centering
\advance\leftskip-0.7cm
\advance\rightskip-1cm
\hspace{-0.3cm}\includegraphics[width=9.2cm]{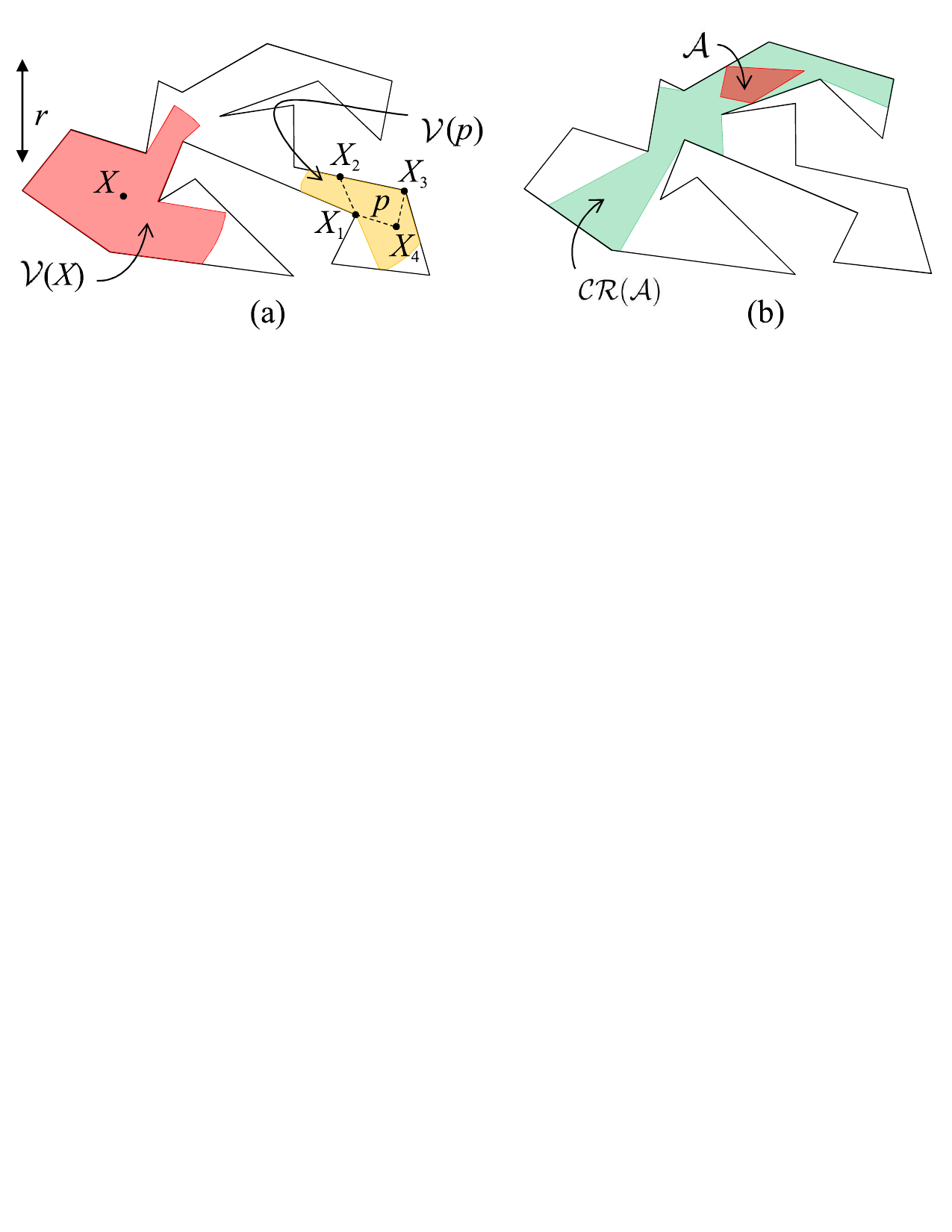}
\vspace{-9.1cm}\caption{  a) Visibility area of a point and a polygon with the maximum range $r$. b) Connection region of an area. }\label{fig-visibility.pdf}
\vspace{-0.1 cm}
\end{figure}


\begin{figure}[!t]
\centering
\advance\leftskip-0.8cm
\advance\rightskip-1cm
\includegraphics[width=9.2cm]{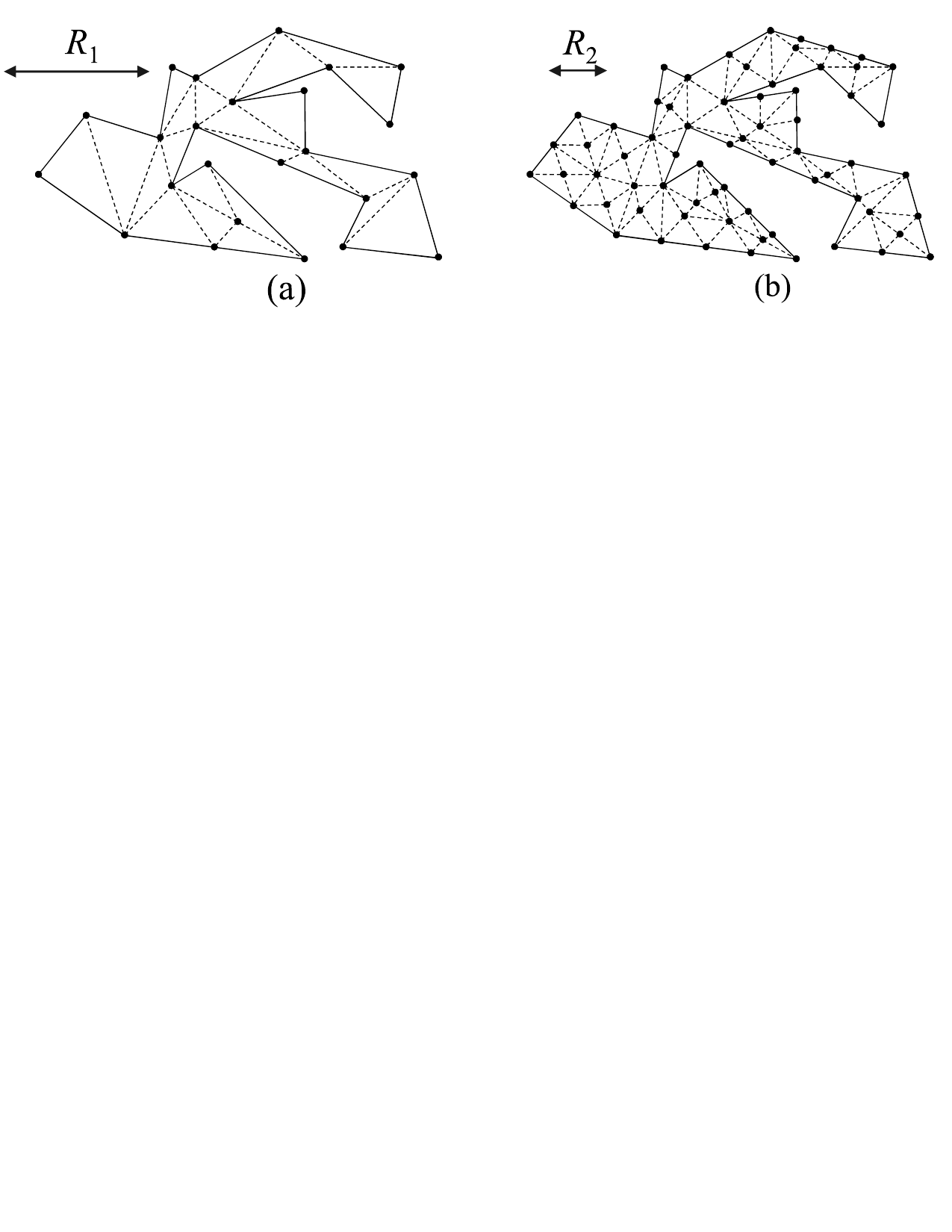}
\vspace{-9.3cm}\caption{  Hyper triangulation with two different parameter values: a) $\mathcal{HT}(R=R_1)$ and b) $\mathcal{HT}(R=R_2)$.}\label{fig-hyper-triangulation}
\vspace{-0.4 cm}
\end{figure}

Figure \ref{fig-hyper-triangulation} exhibits $\mathcal{HT}(R=R_1)$ and $\mathcal{HT}(R=R_2)$, where $R_1>R_2$. Starting from the triangulation  $\mathcal{HT}(R=\infty)$ as in Fig.~\ref{convex-3coloring.eps}(c), Fig.~\ref{fig-hyper-triangulation}(a) is derived by bisecting three triangles whose largest sides are longer than $R_1$. Similarly, in Fig.~\ref{fig-hyper-triangulation}(b), it is verified that all the sides of the triangles are smaller than $R_2$. As $R$ decreases in $\mathcal{HT}(R)$, the total number of triangles $M$ increases with an approximate rate of $1/R^2$ for small values of $R$.
The smaller $r$ requires the smaller $R$ to be set in hyper triangulation to ensure the polygons have sufficient visibility areas.
Here, we  characterize the properties of the hyper triangulation through Lemma \ref{lemma:non-empty-visibility} to  \ref{Lemma:visibility_to_inside_triangle}.

\begin{lemma}\label{lemma:non-empty-visibility}
Let $p$ be a triangle in a hyper triangulation from the space $\mathcal{HT}(R\leq \sqrt{3}r)$. Then, $\mathcal{V}(p)\neq \emptyset$.
\end{lemma}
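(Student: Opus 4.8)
The plan is to exhibit a single witness point $Y$ that sees all three vertices of $p$ within range $r$, which by Definition~\ref{polygon_visibility.def} is exactly what $\mathcal{V}(p)\neq\emptyset$ requires. The key simplification is that line-of-sight comes for free: since $p$ is a triangle it is convex, and as a cell of the (hyper) triangulation it lies entirely inside the layout. Hence if $Y$ is chosen in the closed triangle $p$, each segment $YX_i$ stays inside $p$ by convexity and therefore inside the layout, so the only remaining condition is the range bound $\|YX_i\|\le r$ for $i=1,2,3$. The statement thus reduces to a purely metric claim: given a triangle all of whose sides are at most $R\le\sqrt{3}\,r$, find a point of the closed triangle lying within distance $r$ of all three vertices.

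First I would take $Y$ to be the center of the minimum enclosing circle of $\{X_1,X_2,X_3\}$, with radius denoted $\rho$. Two standard facts make this the natural choice: the center of a minimum enclosing circle always lies in the convex hull of the defining points, here the triangle $p$ itself, so $Y\in p$ as needed; and the enclosing radius is controlled by the diameter of the point set, i.e.\ by the longest side. The substance of the proof is the estimate $\rho\le R/\sqrt{3}$, because then $\rho\le\sqrt{3}\,r/\sqrt{3}=r$ and we are finished.

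To establish $\rho\le R/\sqrt{3}$ I would split into cases according to the largest angle of the triangle. If the triangle is right or obtuse, the minimum enclosing circle has the longest side as a diameter, so $\rho$ equals half the longest side, which is at most $R/2<R/\sqrt{3}$. If the triangle is acute, then $Y$ is the circumcenter, which indeed lies inside the triangle, and $\rho$ is the circumradius. Writing $\rho=a_{\max}/(2\sin A_{\max})$ with $a_{\max}$ the longest side and $A_{\max}$ the largest angle opposite it, I would use $a_{\max}\le R$ together with $A_{\max}\ge 60^\circ$ (the largest angle of any triangle is at least $60^\circ$) and the monotonicity of $\sin$ on $[60^\circ,90^\circ)$ to obtain $\rho\le R/(2\sin 60^\circ)=R/\sqrt{3}$. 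The extremal case is the equilateral triangle of side $R$, where $\rho=R/\sqrt{3}$ is attained exactly; this is precisely why the hypothesis carries the constant $\sqrt{3}$. Alternatively, the whole estimate is the planar instance of Jung's theorem, which bounds the radius of the smallest enclosing disk of a set of diameter $d$ by $d/\sqrt{3}$.

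I expect the main obstacle to be the bookkeeping around the choice of witness and the verification that it lies inside $p$: the naive circumcenter fails to lie inside the triangle precisely in the obtuse case, so one must either pass to the minimum enclosing circle and invoke that its center lies in the convex hull, or treat the obtuse case separately using the midpoint of the longest side. Once the witness is pinned down inside $p$, the line-of-sight conditions follow immediately from convexity, and the radius estimate above delivers the range condition, yielding $Y\in\mathcal{V}(X_1)\cap\mathcal{V}(X_2)\cap\mathcal{V}(X_3)=\mathcal{V}(p)$ and hence $\mathcal{V}(p)\neq\emptyset$.
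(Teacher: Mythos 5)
Your proposal is correct and follows essentially the same route as the paper: both take the center of the minimum enclosing circle of $p$ as the witness, observe that it lies in $p$ so the sight-lines to the vertices stay inside the layout, and then bound the enclosing radius by $R/\sqrt{3}\le r$ (the paper derives this bound in one stroke via the chord relation $\|X_2X_3\|=2r_0\sin(\widehat{O}/2)$ with $\widehat{O}\in[2\pi/3,\pi]$, whereas you split into the acute and right/obtuse cases, but this is the same Jung-type estimate with the same extremal equilateral case).
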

\begin{proof} See Appendix.
 \end{proof}
 
According to Lemma \ref{lemma:non-empty-visibility}, setting $R$ to any value less than $ \sqrt{3}r$ ensures a non-empty visibility area for all the triangles in the hyper triangulation.

\begin{lemma}\label{lemma:Covering-visibility}
Let $p$ be a triangle in a hyper triangulation from $\mathcal{HT}(R\leq r)$. Then, $Q\in\mathcal{V}(p)$ for any point $Q$ inside $p$.
\end{lemma}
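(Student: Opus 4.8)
The plan is to reduce the statement to the two defining conditions of a point's visibility area and check each of them at every vertex of the triangle. Writing $X_1,X_2,X_3$ for the vertices of $p$, the remark following Definition~\ref{polygon_visibility.def} gives $\mathcal{V}(p)=\mathcal{V}(X_1)\cap\mathcal{V}(X_2)\cap\mathcal{V}(X_3)$. Hence, for an arbitrary point $Q$ inside $p$, it suffices to prove $Q\in\mathcal{V}(X_i)$ for each $i$, which by Definition~\ref{point_visibility.def} means verifying (i) that the segment $QX_i$ lies entirely inside the layout, and (ii) that $\|QX_i\|\le r$.

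For condition (i) I would use convexity of the triangle together with the fact that $\mathcal{HT}(R)$ only subdivides the layout into triangles. Since $Q$ and $X_i$ both belong to the convex set $p$, the entire segment $QX_i$ is contained in $p$; and since $p$ is a cell of a triangulation of the layout, $p$ itself lies inside the layout, so $QX_i$ does too.

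Condition (ii) is where the parameter $R$ enters, and it is the only step needing a short argument. Fixing $X_i$, I would consider the function $Q\mapsto\|X_i-Q\|$, which is convex; its maximum over the closed triangle $p$ is therefore attained at an extreme point of $p$, that is, at one of the three vertices. As $\|X_i-X_i\|=0$, this maximum equals $\max_{j\neq i}\|X_iX_j\|$, and both of these quantities are lengths of sides of $p$. By Definition~\ref{hyper-triangulation.def}, $\mathcal{HT}(R)$ terminates precisely when no triangle has a side longer than $R$, so every side of $p$ has length at most $R\le r$. Thus $\|QX_i\|\le\max_{j\neq i}\|X_iX_j\|\le R\le r$, establishing (ii).

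Combining (i) and (ii) yields $Q\in\mathcal{V}(X_i)$ for all three vertices and hence $Q\in\mathcal{V}(p)$. I expect the main (and essentially only) obstacle to be the geometric claim that the point of a triangle farthest from a fixed vertex is itself a vertex; I would justify this cleanly via convexity of the Euclidean distance and the fact that a triangle is the convex hull of its vertices, after which the rest follows immediately from the definitions.
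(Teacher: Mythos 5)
Your proof is correct, and it follows the same overall decomposition as the paper's: reduce $Q\in\mathcal{V}(p)$ to the two conditions of Definition~\ref{point_visibility.def} at each vertex, dispose of the segment-containment condition by convexity of the triangle and the fact that hyper triangulation only subdivides the layout, and then bound $\|QX_i\|$ by the longest side of $p$, which is at most $R\le r$ by the termination rule of $\mathcal{HT}(R)$. Where you differ is in how you establish that a point of the triangle is never farther from a vertex than another vertex is. The paper argues this with explicit Euclidean geometry: for $\|QX_2\|$ and $\|QX_3\|$ it compares angles in the sub-triangle $QX_2X_3$ (the angle at $Q$ dominates the angle at $X_1$, hence dominates the base angles, so the opposite side $X_2X_3$ is longest), and for $\|QX_1\|$ it extends $X_1Q$ to a point $U$ on the opposite side and chains inequalities through $\|UX_1\|$. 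You instead invoke convexity of $Q\mapsto\|X_i-Q\|$ and the fact that a convex function on the convex hull of finitely many points attains its maximum at one of them. Your route is shorter, avoids any case analysis or auxiliary construction, and generalizes verbatim to arbitrary convex polygons rather than just triangles; the paper's route is self-contained at the level of elementary triangle geometry and matches its figure-based exposition. Both are complete, so there is no gap to repair.
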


\begin{proof} See Appendix.
\end{proof}From Lemma \ref{lemma:Covering-visibility}, setting $R\leq r$ guarantees that the visibility area of each triangle at least covers itself.

\begin{lemma}\label{Lemma:visibility_to_inside_triangle}
Let $p$ be a triangle in $\mathcal{HT}(R)$. If there is a point $Y$ such that $Y\in \mathcal{V}(p)$, then $Y\in \mathcal{V}(Q)$, wherein $Q$ refers to any point inside $p$.
\end{lemma}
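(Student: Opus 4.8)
The plan is to verify directly the two defining conditions of $Y\in\mathcal{V}(Q)$ from Definition~\ref{point_visibility.def}, namely the range condition $\|YQ\|\le r$ and the containment condition that the segment $YQ$ lies entirely inside the layout. Throughout, let $X_1,X_2,X_3$ denote the vertices of $p$ and let $L$ be the layout region (modeled as a simple polygon). Since $Y\in\mathcal{V}(p)$, Definition~\ref{polygon_visibility.def} gives $Y\in\mathcal{V}(X_i)$ for $i=1,2,3$, so each segment $YX_i$ lies in $L$ and $\|YX_i\|\le r$; moreover $p\subset L$ because $p$ is a cell of the hyper triangulation $\mathcal{HT}(R)$.

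For the range condition I would write the interior point $Q$ as a convex combination $Q=\lambda_1 X_1+\lambda_2 X_2+\lambda_3 X_3$ with $\lambda_i\ge 0$ and $\sum_i\lambda_i=1$. Then $Y-Q=\sum_i\lambda_i(Y-X_i)$, and the triangle inequality yields $\|YQ\|\le\sum_i\lambda_i\|YX_i\|\le r\sum_i\lambda_i=r$. This step is routine and uses only convexity of $p$ together with the three range bounds inherited from $Y\in\mathcal{V}(p)$.

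For the containment condition, the core is a topological claim: if the boundary $\partial T$ of a triangle $T$ is contained in $L$, then all of $T$ is contained in $L$. I would prove this by noting that $\mathrm{int}(T)$ is the bounded component of $\mathbb{R}^2\setminus\partial T$, while $\mathbb{R}^2\setminus L$ is connected and unbounded because $L$ is a simple polygon without holes; any point of $\mathrm{int}(T)$ lying outside $L$ could be joined to infinity by a path inside $\mathbb{R}^2\setminus L$, and such a path would have to cross $\partial T\subset L$, a contradiction. With this claim in hand I would argue by cases. If $Y\in p$, convexity of $p$ and $p\subset L$ immediately give $YQ\subset p\subset L$. Otherwise, since $p$ is convex the segment $YQ$ meets $p$ in a single sub-segment $[Q',Q]$, where $Q'\in\partial p$ is the first crossing point, so $Q'Q\subset p\subset L$. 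The crossing point $Q'$ lies on some edge $X_iX_j$, and the triangle $T=YX_iX_j$ has all three sides in $L$ (sides $YX_i$ and $YX_j$ by hypothesis, and side $X_iX_j\subset\partial p\subset L$); hence $T\subset L$ by the topological claim, and the cevian $YQ'$ from vertex $Y$ to the opposite side satisfies $YQ'\subset T\subset L$. Concatenating, $YQ=YQ'\cup Q'Q\subset L$.

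The main obstacle is precisely this topological step, which upgrades ``$\partial T$ inside $L$'' to ``$T$ inside $L$''; it is the only place where simple-connectivity of the floor plan is essential, and the claim can fail if interior obstacles (holes) are permitted, so I would state the standing assumption that the layout is a simple polygon. The remaining geometric bookkeeping is elementary: the single-crossing property follows from convexity of $p$, and the fact that a vertex-to-opposite-side cevian stays inside a triangle follows from convexity of $T$. The boundary case $Q\in\partial p$ is covered by the same argument (the convex-combination bound allows $\lambda_i=0$, and the crossing point may then coincide with $Q$).
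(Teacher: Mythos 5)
Your proof is correct and follows essentially the same route as the paper's: the range condition is obtained by bounding $\|YQ\|$ via the distances $\|YX_i\|\le r$, and the containment condition by showing that the region spanned by $Y$ and the triangle (the paper's convex hull of $X_1,X_2,X_3,Y$; your triangle $YX_iX_j$ glued to $p$) has all of its boundary segments inside the layout, so no wall can cross $YQ$. The only substantive difference is that you make explicit the simple-connectivity (no-holes) assumption needed to upgrade ``boundary inside the layout'' to ``interior inside the layout,'' a point the paper's convex-hull argument uses implicitly.
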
 
\begin{proof} See Appendix.
\end{proof}

Using definitions \ref{point_visibility.def} and \ref{polygon_visibility.def}, we define the graph model: 

\begin{definition}\label{definition:PV-graph}$\hspace{-0.1cm}\bf{Partition\hspace{-0.1cm}-\hspace{-0.1cm}based~Visibility~(PV)~ graph}$ refers to a simple unweighted  graph whose nodes represent the triangles (polygons) $p_1$, ...., $p_M$ of the layout. Two nodes $p_i$ and $p_j$ are adjacent if and only if $\mathcal{V}(p_i)\cap \mathcal{V}(p_j)\neq \emptyset$ for $1\leq i,j\leq M$ and $i\neq j$.
\end{definition}

The structure of the PV graph depends on the shape of the layout, the value of $R$ for $\mathcal{HT}(R)$, and the cell range $r$.
 PV graphs provide valuable information, such as an infinite set of possible VLC AP placements for LoS coverage; it is also an effective tool for assessing the optimality of a proposed VLC AP deployment. Further, it provides a basis for considering a variety of requirements in optical wireless networks.

\begin{definition}\label{Clique_visibility.def}
 $\bf{Visibility~Area~of~a 
 ~Clique}$ $\mathcal{V}(c)$ refers to the intersection of the visibility areas of all the nodes in a clique $c$ from the PV graph, i.e., 
 $\mathcal{V}(c)=\bigcap_{p_i\in c}\mathcal{V}(p_{i})$.
\end{definition}

\vspace{-3mm}
\section{Deployment of VLC APs to ensure LoS in wireless access links}
\label{sec:4}
\vspace{0mm}

For LoS coverage in an indoor area, we aim to determine the minimum number of VLC APs and their locations. For this, it is necessary to cover every point in a layout with at least one VLC AP to achieve LoS coverage.
Hence, in this section, we consider the use of clique partitioning as a feasible deployment method of VLC APs to enable LED-based wireless access.

\begin{figure}[!t]
\centering
\hspace{0cm}\includegraphics[width=8.3cm]{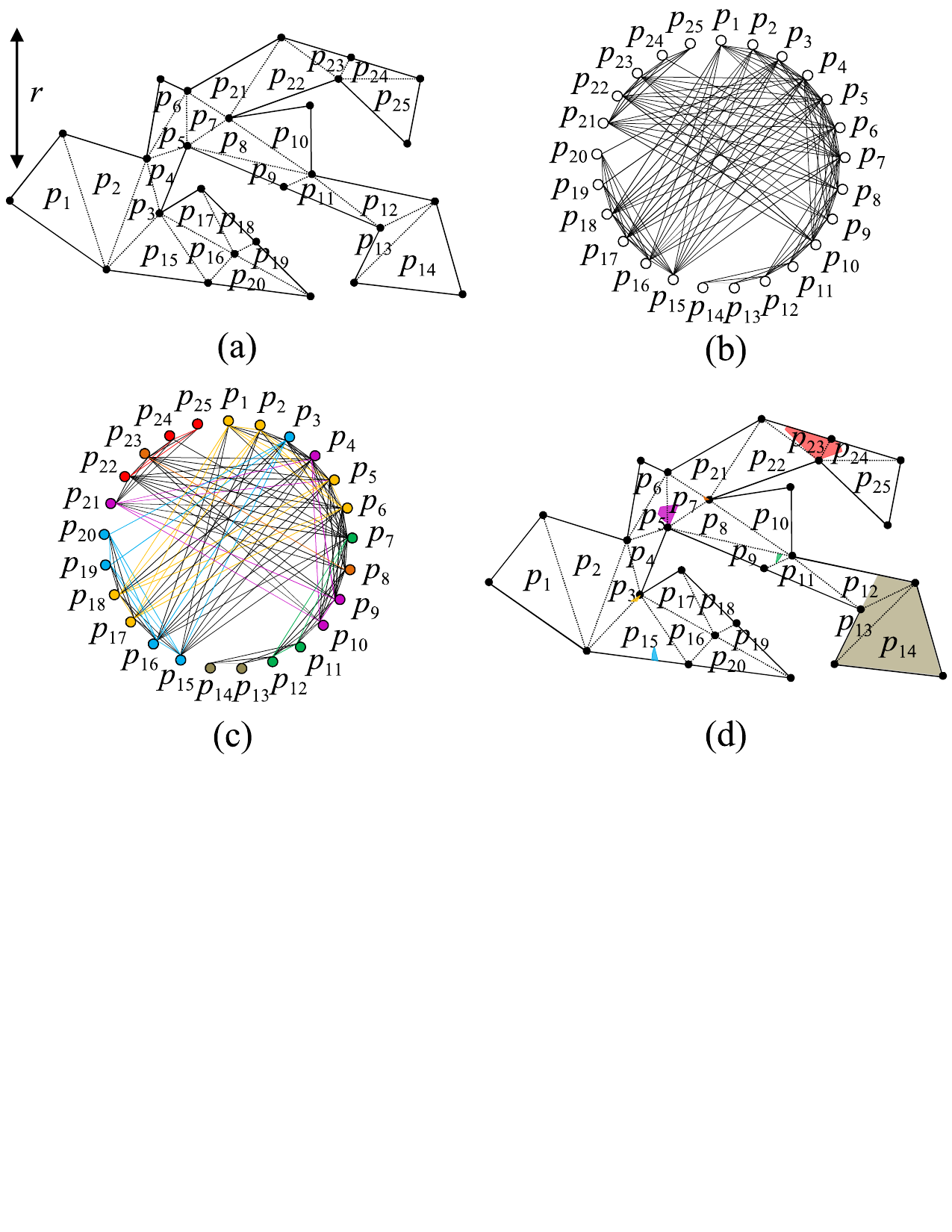}
\vspace{-4.5cm}\caption{  A feasible VLC AP deployment to meet LoS coverage via a non-optimal clique partitioning of the PV graph: a) The sample layout partitioned by $\mathcal{HT}(R=r)$; b) the resulting PV graph of the partitioned layout; c) a non-optimal clique clustering of the PV graph; d) visibility areas of the cliques.}  \label{fig-nonoptimal1.pdf} 

\vspace{-0.4 cm}
\end{figure}

\begin{Theorem}\label{Theorem:clique_partitioning}
Assume partitioning a PV graph into $g$ cliques $c_1$, $c_2$,..., $c_g$, such that $\mathcal{V}(c_j)\neq \emptyset$ for  all $j$. Then, deploying a set of $g$ VLC APs, one anywhere inside each $\mathcal{V}(c_j)$ ensures LoS coverage in access.
\end{Theorem}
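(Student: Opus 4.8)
The plan is to unpack the phrase ``LoS coverage in access'' into the concrete geometric statement that every point $Q$ of the layout lies in the visibility area of at least one deployed AP, i.e.\ that for some $j$ the segment $QA_j$ stays entirely inside the layout and has length at most $r$, where $A_j$ denotes the AP placed inside $\mathcal{V}(c_j)$. The first observation I would record is structural: since $c_1,\dots,c_g$ \emph{partition} the node set of the PV graph, and by Definition~\ref{definition:PV-graph} the nodes are exactly the triangles $p_1,\dots,p_M$ of the hyper triangulation, every triangle belongs to exactly one clique; and since these triangles tile the layout, any target point $Q$ lies inside (or on the boundary of) at least one triangle. This reduces the theorem to a single per-point argument.

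First I would fix an arbitrary point $Q$ and pick a triangle $p_i$ containing it. Let $c_j$ be the unique clique with $p_i\in c_j$, and let $A_j$ be the AP placed inside $\mathcal{V}(c_j)$; such an $A_j$ exists precisely because of the hypothesis $\mathcal{V}(c_j)\neq\emptyset$. By Definition~\ref{Clique_visibility.def}, $\mathcal{V}(c_j)=\bigcap_{p_k\in c_j}\mathcal{V}(p_k)\subseteq\mathcal{V}(p_i)$, so $A_j\in\mathcal{V}(p_i)$. I would then invoke Lemma~\ref{Lemma:visibility_to_inside_triangle} with the point $Y=A_j$ and the triangle $p_i$: since $A_j\in\mathcal{V}(p_i)$ and $Q$ lies inside $p_i$, the lemma yields $A_j\in\mathcal{V}(Q)$. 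Finally, because the two defining conditions of the visibility relation in Definition~\ref{point_visibility.def} (segment inside the layout, Euclidean length $\le r$) are symmetric in their endpoints, $A_j\in\mathcal{V}(Q)$ is equivalent to $Q$ being served by $A_j$ over a LoS link of range at most $r$. As $Q$ was arbitrary, LoS coverage holds everywhere.

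The argument is short because the geometric heavy lifting has been front-loaded into Lemma~\ref{Lemma:visibility_to_inside_triangle}; what remains is mostly bookkeeping around the partition plus the symmetry of visibility. I expect the only genuine subtlety to be the treatment of boundary points: a $Q$ lying on an edge or vertex shared by several triangles admits several containing triangles, and one must note that the conclusion holds for \emph{any} of them, so picking one is harmless. It is also worth emphasizing why the hypothesis $\mathcal{V}(c_j)\neq\emptyset$ is not redundant: being a clique only forces the visibility areas to intersect \emph{pairwise}, and (a Helly-type failure) this does not in general force the common intersection of all nodes in the clique to be nonempty. Hence $\mathcal{V}(c_j)\neq\emptyset$ is exactly the feasibility condition guaranteeing a valid single location that covers the whole clique. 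Finally, I would point out that this implication uses neither $R\le r$ nor $R\le\sqrt{3}r$, so it imposes no extra constraint on the hyper triangulation parameter beyond those already invoked inside Lemma~\ref{Lemma:visibility_to_inside_triangle}.
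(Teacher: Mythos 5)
Your proof is correct and follows essentially the same route as the paper's: both reduce coverage to the fact that each triangle lies in some clique $c_j$, then invoke Lemma~\ref{Lemma:visibility_to_inside_triangle} to conclude that an AP placed anywhere in $\mathcal{V}(c_j)\subseteq\mathcal{V}(p_i)$ sees every interior point of $p_i$, and finish by noting the cliques exhaust the triangulation. Your additional remarks (symmetry of the visibility relation, boundary points, and the non-redundancy of the hypothesis $\mathcal{V}(c_j)\neq\emptyset$ due to the possible failure of a Helly-type property) are sound elaborations rather than a different argument.
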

\begin{proof}
 From Lemma \ref{lemma:non-empty-visibility}, the space $\mathcal{HT}(R\leq \sqrt{3}r)$ guarantees that no node in the PV graph has an empty visibility area. Besides,
 Lemma \ref{Lemma:visibility_to_inside_triangle} implies that a VLC AP anywhere inside $\mathcal{V}(c_j)\neq \emptyset$ covers every point inside the triangles $p_{i}\in c_j$. 
 Finally, all nodes in the PV graph are covered by the union of all cliques $\bigcup_{j=1}^g c_j$, thus confirming Theorem \ref{Theorem:clique_partitioning}.
\end{proof}

Fig. \ref{fig-nonoptimal1.pdf} illustrates a feasible deployment of VLC APs to provide LoS coverage in the sample layout with the maximum range $r$ using a non-optimal clique partitioning of the PV graph. Fig.~\ref{fig-nonoptimal1.pdf}(a) shows $\mathcal{HT}(R=r)$ with $M=25$ triangles, for which Fig.~\ref{fig-nonoptimal1.pdf}(b) represents the PV graph. For instance,  since $\mathcal{V}(p_3)\cap\mathcal{V}(p_{20})\neq \emptyset$ and $\mathcal{V}(p_3)\cap\mathcal{V}(p_{22})= \emptyset$, the nodes $p_{3}$ and $p_{20}$ are adjacent, while $p_{3}$ and $p_{22}$ are non-adjacent. Fig.~\ref{fig-nonoptimal1.pdf}(c) displays the partition of the PV graph into seven cliques distinguished by seven different colors, whose visibility areas are illustrated in Fig.~\ref{fig-nonoptimal1.pdf}(d) with similar colors. For example, the purple area  in the layout represents the visibility area of the clique with the nodes $p_4$, $p_9$, $p_{10}$, and $p_{21}$. Hence, deploying $g=7$ VLC APs anywhere inside each clique visibility area, as in Fig.~\ref{fig-nonoptimal1.pdf}(d), guarantees LoS coverage for the sample layout.

\begin{figure*}[!t]
\centering
\hspace{0cm}\includegraphics[width=14cm]{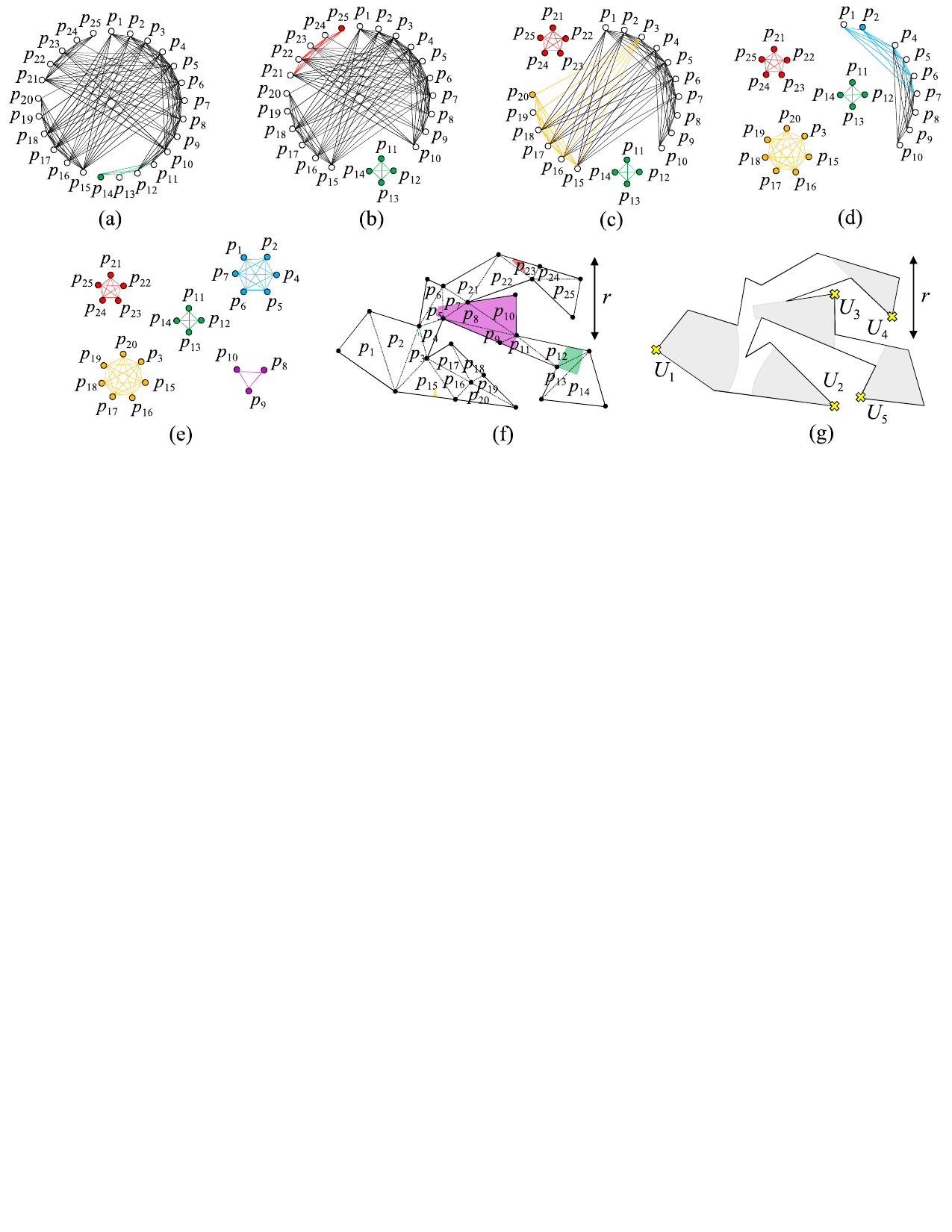}
\vspace{-11.9cm}\caption{ Optimal deployment of VLC APs for a LoS coverage via MCC method: a) The PV graph and determining the first maximal clique; b) the second maximal clique; c) the third maximal clique; d) the forth maximal clique; e) the fifth maximal clique; f) Visibility area of the five maximal cliques that were identified in the PV graph; g) Determination of the lower-bound by disclosing a set of $s=5$ hidden points inside the layout. }\label{fig:constrained-optimal1}
\vspace{-0.5 cm}
\end{figure*}

Theorem~\ref{Theorem:clique_partitioning} suggests that finding the  minimum number of VLC APs required for LoS coverage corresponds to the PV graph partitioning into the minimum number of cliques, i.e., minimizing $g$, which is known as the minimum clique cover problem in the literature.
 Since the minimum clique cover problem is NP-hard, we propose the Maximal Clique Clustering (MCC) method shown in Algorithm \ref{tabel:Max_Clique_partitioning} to find the fewest number of cliques. By sorting the nodes in an ascending degree order, MCC determines a maximal clique by clustering the minimum degree node and adding the consequent nodes as long as it keeps forming a clique with a non-empty visibility area. Then, by removing the maximal clique, we start over the steps until no node in the PV graph is left.

 Fig.~\ref{fig:constrained-optimal1} visualizes the steps of the MCC method to identify the optimal number of VLC APs for (LoS) wireless access in the sample layout when cell range is $r$. By starting with the PV graph as in Fig.~\ref{fig-nonoptimal1.pdf}(b), we choose $p_{14}$ as the minimum degree node in Fig.~\ref{fig:constrained-optimal1}(a) and cluster $p_{11}p_{12}p_{13}p_{14}$ as the maximal clique connected to $p_{14}$. Then, by removing this clique, we select $p_{25}$ as the minimum degree node in Fig.~\ref{fig:constrained-optimal1}(b). Continuing the similar steps, we end up with the five cliques shown in Fig.~\ref{fig:constrained-optimal1}(e), whose visibility areas are displayed in Fig.~\ref{fig:constrained-optimal1}(f). As a result, deploying $g=5$ VLC APs, one inside each visibility area, ensures the full coverage of the layout. However, an effective benchmark to assess the optimality of the number of VLC APs is still demanding.

 \begin{lemma}\label{Lemma:lowerbound}
 Presume that a set of $s$ points in a layout, $U_1$, $U_2$,..., $U_s$, which have pairwise disjoint visibility areas, i.e., $\mathcal{V}(U_i)\cap \mathcal{V}(U_j)=\emptyset$ for $1\leq i,j \leq s$ and $i\neq j$, referred as a \emph{hidden} set of points. Then, $s$ specifies a lower bound to the minimum number of VLC APs required for LoS coverage of the layout, i.e., $s\leq g$.
 
  \end{lemma}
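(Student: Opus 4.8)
The plan is to argue by a pigeonhole-type contradiction, built on the symmetry of the visibility relation. First I would make explicit the fact that underpins the whole argument: a VLC AP placed at a point $P$ provides LoS coverage to a point $U$ exactly when the segment $PU$ lies inside the layout and $\|PU\|\leq r$, which by Definition~\ref{point_visibility.def} is precisely the statement $U\in\mathcal{V}(P)$. Since both defining conditions of Definition~\ref{point_visibility.def}, namely the segment lying entirely inside the layout and the range constraint $\|PU\|\leq r$, are symmetric in their two endpoints, this is equivalent to $P\in\mathcal{V}(U)$. In words: an AP covers a point $U$ if and only if the AP itself lies in the visibility area of $U$.

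Next I would fix an arbitrary feasible deployment of $g$ VLC APs that achieves LoS coverage of the whole layout, and observe that each hidden point $U_i$, being a point of the layout, must be covered by at least one of these $g$ APs. This lets me define a map $\sigma$ that sends each hidden point $U_i$ to (a choice of) an AP that covers it; the symmetry noted above guarantees that such an AP exists for every $U_i$, so $\sigma$ is well defined on all of $U_1,\dots,U_s$.

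The crux of the argument is to show that $\sigma$ is injective, equivalently that no single AP can cover two distinct hidden points. Suppose, for contradiction, that one AP at position $P$ covers both $U_i$ and $U_j$ with $i\neq j$. By the symmetry established above, $P\in\mathcal{V}(U_i)$ and $P\in\mathcal{V}(U_j)$, hence $P\in\mathcal{V}(U_i)\cap\mathcal{V}(U_j)$. But the hidden-set hypothesis asserts $\mathcal{V}(U_i)\cap\mathcal{V}(U_j)=\emptyset$, a contradiction. Therefore $\sigma$ is injective, and an injection from a set of size $s$ into a set of size $g$ forces $s\leq g$, which is exactly the claimed lower bound.

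I do not expect a genuinely hard step here; the entire content lies in the observation that pairwise-disjoint visibility areas of the hidden points translate, through the symmetry of $\mathcal{V}(\cdot)$, into the impossibility of one AP serving two hidden points. The only points deserving care are the clean justification of that symmetry directly from Definition~\ref{point_visibility.def}, and the remark that every hidden point is necessarily served by any valid deployment, so that the counting argument applies to all of $U_1,\dots,U_s$ simultaneously.
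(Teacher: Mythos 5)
Your proof is correct and follows essentially the same route as the paper: the paper's own argument is precisely that any AP covering $U_i$ must lie in $\mathcal{V}(U_i)$, so the pairwise-disjoint areas $\mathcal{V}(U_1),\dots,\mathcal{V}(U_s)$ each require their own AP, forcing $s\leq g$. You merely make explicit the symmetry of the visibility relation and the injectivity/pigeonhole step that the paper leaves implicit.
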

  \begin{proof}
  To ensure LoS coverage, it is necessary to cover  the points $U_1, U_2,..., U_s$. In other words, we require to place at least one VLC AP inside each one of the disjoint areas $\mathcal{V}(U_1), \mathcal{V}(U_2),..., \mathcal{V}(U_s)$. Thus, Lemma \ref{Lemma:lowerbound} follows.
  \end{proof}
 From  Lemma \ref{Lemma:lowerbound}, the hidden set of $s=5$ cross-marked points in Fig.~\ref{fig:constrained-optimal1}(g) confirms that the $g=s=5$ number of VLC APs determined in  Fig.~\ref{fig:constrained-optimal1}(f) is optimal. However, finding a hidden set of points might often be challenging due to the continuous nature of layouts. Thus, we alternatively utilize the PV graph according to the procedure that is detailed as follows.

\renewcommand{\tablename}{Algorithm}
\begin{table}[!t]
\caption{Maximal Clique  Clustering}\label{tabel:Max_Clique_partitioning}
\vspace{-0.3cm}
\begin{center}
\hspace{0cm}\begin{tabular}{ l | l}\hline\hline

{\footnotesize{~}}&
{\footnotesize{{\footnotesize{\textbf{Input} ~~$\mathbb{G}[p_1,p_2,...,p_M]$ ~~~$\%$ The PV graph }}}} \\

\hline

{\footnotesize{1}}&
{\footnotesize{{\footnotesize{{\bf{While}} ~~~$\mathbb{G}\neq \emptyset$: }}}} \\

{\footnotesize{2}}&{\footnotesize{{\footnotesize{~~~~$\mathbb{G}[p'_1,p'_2,..., p'_M]\leftarrow$  $\texttt{Ascending-Sort}(\mathbb{G})$  }}}} \\

   {\footnotesize{3}}&
{\footnotesize{{\footnotesize{~~~~$c\leftarrow \emptyset$}}}} \\

 {\footnotesize{4}}&{\footnotesize{{\footnotesize{~~~~{\bf{For}}~~ $i=1$ \bf{to} $M$: }}}} \\

 {\footnotesize{5}}&
{\footnotesize{{\footnotesize{~~~~~~~~{\bf{If}} ~~$c\cup p'_i~ \texttt{forms a clique in~} \mathbb{G} $: }}}} \\

{\footnotesize{6}}&
{\footnotesize{{\footnotesize{~~~~~~~~~~~{\bf{If}}~~  $\mathcal{V}(c\cup p'_i)\neq \emptyset$: }}}} \\

{\footnotesize{7}} & {\footnotesize{~~~~~~~~~~~~~~~~~$c\leftarrow  c \cup p'_i$  }}\\ 


{\footnotesize{8}}&
{\footnotesize{{\footnotesize{~~~~$\mathbb{G}\leftarrow \mathbb{G}-c$ }}}} \\
{\footnotesize{9}}&
{\footnotesize{{\footnotesize{~~~~$M\leftarrow |\mathbb{G}|$}}}} \\


{\footnotesize{10}}&
{\footnotesize{{{\footnotesize{{\bf{Return}}  }}}}}\\

\hline

\end{tabular}
\end{center}
\vspace{-0.5cm}
\end{table}

   \begin{Theorem}\label{Theorem:lowerbound2}
   
   Presume that a set of $t$ nodes $q_1$, $q_2$, ..., $q_t$ in the PV graph forms an independent set, i.e., no two nodes of the set are adjacent. If there found a hidden set of $t$  points $U_1$, $U_2$,..., $U_t$ in the layout lying inside the triangles $q_1$, $q_2$, ..., and $q_t$, respectively, then $t$ reveals a lower bound to the number of VLC APs for LoS coverage, i.e., $t \leq g$. 
   \end{Theorem}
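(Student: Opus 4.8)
The plan is to reduce Theorem \ref{Theorem:lowerbound2} to the already-established Lemma \ref{Lemma:lowerbound}, so that the real work is to certify that the $t$ points $U_1,\dots,U_t$ constitute a \emph{hidden} set in the precise sense of pairwise disjoint visibility areas; once this is in hand, $t\le g$ is immediate from Lemma \ref{Lemma:lowerbound}. The argument therefore splits into translating the combinatorial hypothesis (independence in the PV graph) into geometry, and then pinning down the disjointness of the point visibility areas $\mathcal{V}(U_i)$.

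First I would unpack the independent-set assumption. By Definition \ref{definition:PV-graph}, two nodes $q_i$ and $q_j$ are adjacent exactly when $\mathcal{V}(q_i)\cap\mathcal{V}(q_j)\neq\emptyset$; hence independence of $\{q_1,\dots,q_t\}$ is equivalent to $\mathcal{V}(q_i)\cap\mathcal{V}(q_j)=\emptyset$ for every $i\neq j$, i.e.\ pairwise disjointness at the level of the triangles. I would also record, via Lemma \ref{lemma:Covering-visibility}, that each $U_i$ (lying inside $q_i$) satisfies $U_i\in\mathcal{V}(q_i)$, so the chosen points sit inside mutually disjoint triangle-visibility regions and are in particular distinct.

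The delicate step is passing from disjointness of the triangle visibility areas to disjointness of the point visibility areas. Here the relevant inclusion is the one supplied by Lemma \ref{Lemma:visibility_to_inside_triangle}, namely $\mathcal{V}(q_i)\subseteq\mathcal{V}(U_i)$ for any interior point $U_i$. Because this containment runs the ``wrong way'' — the single-point visibility area is the \emph{larger} set — disjointness of the smaller sets $\mathcal{V}(q_i)$ does not by itself force disjointness of the larger sets $\mathcal{V}(U_i)$. This is exactly why the statement carries both hypotheses: independence of the triangles is a genuinely \emph{necessary} precondition, since if $q_i$ and $q_j$ were adjacent then $\mathcal{V}(U_i)\cap\mathcal{V}(U_j)\supseteq\mathcal{V}(q_i)\cap\mathcal{V}(q_j)\neq\emptyset$ for \emph{every} placement of the points, ruling out a hidden set; but independence alone is not sufficient, and the disjointness of the $\mathcal{V}(U_i)$ must be secured by the hidden-set condition on the points themselves. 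I expect this bridging — confirming $\mathcal{V}(U_i)\cap\mathcal{V}(U_j)=\emptyset$ rather than merely $\mathcal{V}(q_i)\cap\mathcal{V}(q_j)=\emptyset$ — to be the main obstacle.

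Finally, with the $U_i$ confirmed to have pairwise disjoint visibility areas, they form a hidden set and Lemma \ref{Lemma:lowerbound} yields $t\le g$, completing the proof. Conceptually, the theorem recasts the continuous search of Lemma \ref{Lemma:lowerbound} as the discrete search for a large independent set in the finite PV graph, with the point-placement check as the remaining certification.
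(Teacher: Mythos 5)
Your proposal is correct and follows essentially the same route as the paper: both rest on Lemma~\ref{Lemma:visibility_to_inside_triangle} to establish the containment $\mathcal{V}(q_i)\subseteq\mathcal{V}(U_i)$ (so that independence of the triangles is the necessary combinatorial shadow of the hidden-set condition), and both obtain $t\le g$ by feeding the hidden set $U_1,\dots,U_t$ into Lemma~\ref{Lemma:lowerbound}. If anything, you are more explicit than the paper about the fact that the disjointness of the $\mathcal{V}(U_i)$ is supplied by the hidden-set hypothesis itself rather than by independence alone, which is a fair and accurate reading.
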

  \begin{proof}
 Lemma \ref{Lemma:visibility_to_inside_triangle} implies that if  $\mathcal{V}(U_i)\cap \mathcal{V}(U_j)=\emptyset$, then  $\mathcal{V}(q_i)\cap \mathcal{V}(q_j)=\emptyset$ and thus $q_i$ and $q_j$ are non-adjacent in the PV graph, where $U_i$ and $U_j$ lie inside $q_i$ and $q_j$, respectively, for $1\leq i,j \leq t$. So, we can explore a hidden set of point $U_1, U_2,...,U_t$ within the layout by determining an independent set of nodes $q_1, q_2,..., q_t$. Thus, Theorem \ref{Theorem:lowerbound2} follows.
 \end{proof}
 Theorem \ref{Theorem:lowerbound2}   suggests that instead of searching for a hidden
set of points over a whole layout, it is enough to find an
independent set of nodes in the PV graph and search over
the corresponding triangles. This procedure is more straightforward due to the discrete nature of the PV graph.
To clarify Theorem \ref{Theorem:lowerbound2},  Fig.~\ref{fig:lowerbound} explains the way to determine a lower bound to $g$ through the PV graph with range $r$. Fig.~\ref{fig:lowerbound}(a)
         represents the similar PV graph as in Fig.~\ref{fig-nonoptimal1.pdf}(b), marked with an independent set of $t=5$ nodes $p_1$, $p_8$, $p_{13}$, $p_{19}$, $p_{24}$. Then, we find a hidden set of $t=5$ points $U_1$,..., $U_5$, shown by cross-marks in Fig. \ref{fig:lowerbound}(b), lying inside the triangles $p_1$, $p_8$, $p_{13}$, $p_{19}$, $p_{24}$, respectively. So, we conclude that $t=5\leq g$.

      \begin{figure}[!t]
\centering
\hspace{0cm}\includegraphics[width=7.8cm]{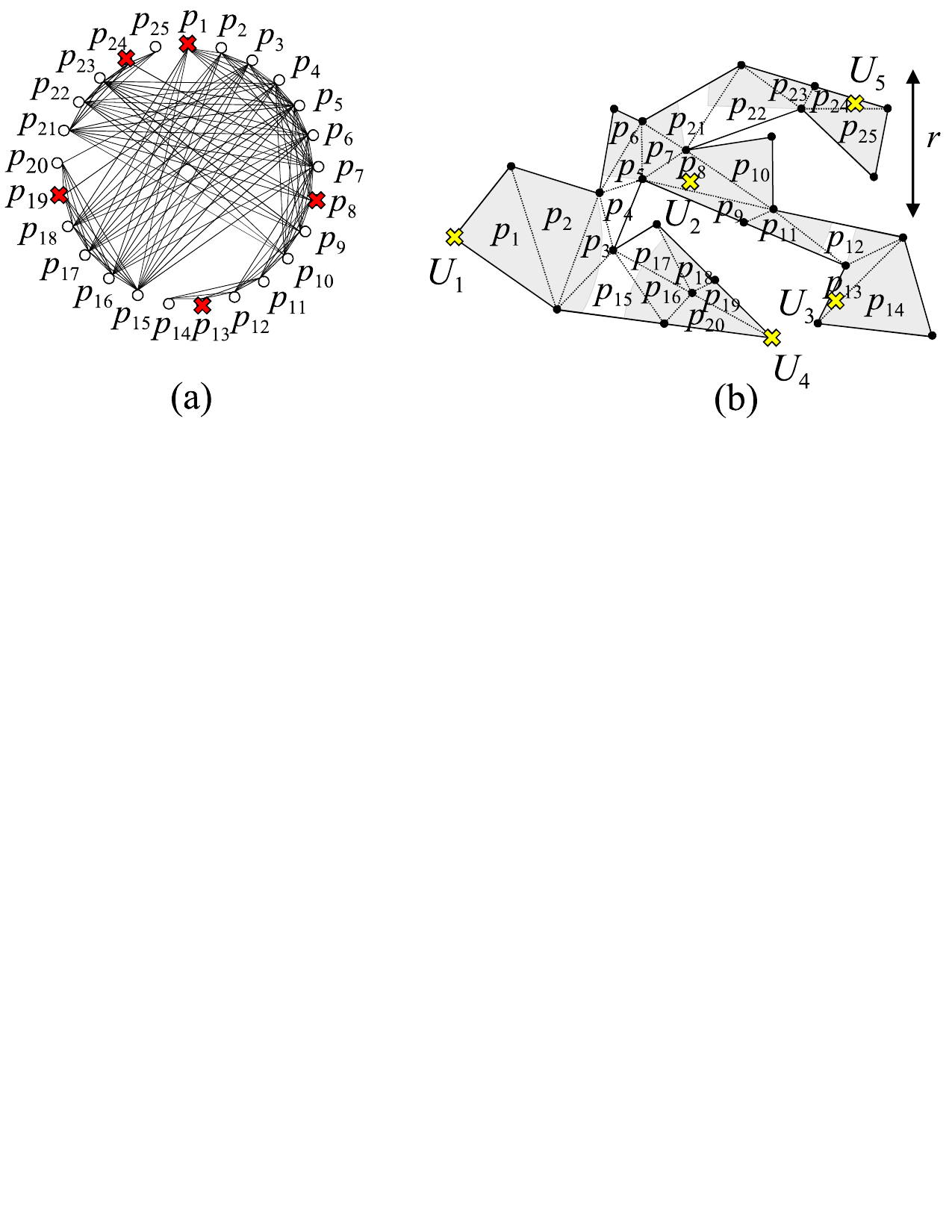}
\vspace{-7.05cm}\caption{ Identifying a lower bound for the minimum number of VLC APs that are required for LoS coverage via the PV graph: a) An independent set of nodes with size $t=5$ in the PV graph; b) Determination of a hidden set of $t=5$ points within each corresponding triangles.}\label{fig:lowerbound}
\vspace{-0.5 cm}
\end{figure}   
   
        The maximum size of the hidden set of points  $s_{\max}$ is a stimulating value  that specifies a tight lower bound to $g$. However, there is no straightforward solution for $s$ maximization problem in a layout. Here again, the PV graph appears  rewarding to determine $s_{\max}$.

     \begin{Theorem}\label{theorem:lowerbound-by-pvgraph}
    Let $q_1$, $q_2$, ..., $q_{t_{\max}}$ be the largest independent set of nodes in the PV graph constructed by a  hyper triangulation from $\mathcal{HT}(R\leq 2r)$, where $t_{\max}$ denotes the independence number. If there found a hidden set of $t_{\max}$ points $U_1$, $U_2$, ..., $U_{t_{\max}}$ in the layout  lying inside the triangles $q_1$, $q_2$, ..., $q_{t_{\max}}$, respectively, then $s_{\max}=t_{\max}$. \end{Theorem}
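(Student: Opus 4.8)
The plan is to establish the identity by proving the two inequalities $t_{\max}\leq s_{\max}$ and $s_{\max}\leq t_{\max}$ separately. The first is immediate from the hypothesis: we are handed a hidden set of $t_{\max}$ points $U_1,\dots,U_{t_{\max}}$, and since $s_{\max}$ is by definition the largest attainable size of a hidden set, this exhibited configuration already certifies $s_{\max}\geq t_{\max}$. Hence the whole substance of the theorem resides in the reverse inequality $s_{\max}\leq t_{\max}$, which I would prove by showing that \emph{every} hidden set of size $s$ induces an independent set of nodes of the same size $s$ in the PV graph; since the independence number $t_{\max}$ bounds every independent set, this gives $s\leq t_{\max}$ for all hidden sets and therefore $s_{\max}\leq t_{\max}$.

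For this reverse direction I would fix an arbitrary hidden set $U_1,\dots,U_s$, so $\mathcal{V}(U_i)\cap\mathcal{V}(U_j)=\emptyset$ for $i\neq j$, and build the independent set in two steps. First I would show that no triangle of the hyper triangulation $\mathcal{HT}(R\leq 2r)$ contains two of the hidden points. Indeed, if $U_i$ and $U_j$ both lay inside a single triangle $p$, then by convexity the segment $U_iU_j$ stays inside the layout, and its length is at most the longest side of $p$, which is at most $R\leq 2r$. Its midpoint $C$ then lies in $p$ (again by convexity) and satisfies $\|U_iC\|=\|U_jC\|\leq r$, so $C\in\mathcal{V}(U_i)\cap\mathcal{V}(U_j)$, contradicting disjointness. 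Consequently each closed triangle contains at most one hidden point, so assigning to each $U_i$ a triangle $q_i$ containing it automatically yields $s$ \emph{distinct} nodes $q_1,\dots,q_s$.

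Second I would verify that $q_1,\dots,q_s$ form an independent set, reusing the mechanism of Theorem~\ref{Theorem:lowerbound2}. By Lemma~\ref{Lemma:visibility_to_inside_triangle}, every point of $\mathcal{V}(q_i)$ also lies in $\mathcal{V}(U_i)$ because $U_i$ is inside $q_i$; that is, $\mathcal{V}(q_i)\subseteq\mathcal{V}(U_i)$. Therefore $\mathcal{V}(q_i)\cap\mathcal{V}(q_j)\subseteq\mathcal{V}(U_i)\cap\mathcal{V}(U_j)=\emptyset$, and by Definition~\ref{definition:PV-graph} the nodes $q_i$ and $q_j$ are non-adjacent. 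The set $\{q_1,\dots,q_s\}$ is thus independent of size $s$, giving $s\leq t_{\max}$; combined with the first inequality this yields $s_{\max}=t_{\max}$.

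The step I expect to be the crux is the ``at most one hidden point per triangle'' claim, since it is the only place where the sharpened threshold $R\leq 2r$ (rather than $R\leq\sqrt{3}\,r$ or $R\leq r$ from Lemmas~\ref{lemma:non-empty-visibility}--\ref{lemma:Covering-visibility}) is genuinely required, and it is precisely what upgrades Theorem~\ref{Theorem:lowerbound2}---which only delivers $t\leq g$ for a chosen independent set---into the exact identity. The one subtlety to dispatch is a hidden point that falls on an edge or vertex shared by several triangles; here I would note that the midpoint argument already forbids two hidden points from sharing any common closed triangle, so the map $U_i\mapsto q_i$ stays injective regardless of how such ties are resolved, and the conclusion is unaffected.
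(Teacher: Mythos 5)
Your proof is correct and follows essentially the same route as the paper's: inequality $s_{\max}\geq t_{\max}$ from the exhibited hidden set, and $s_{\max}\leq t_{\max}$ by mapping a (maximal) hidden set to an independent set of nodes via the containment $\mathcal{V}(q_i)\subseteq\mathcal{V}(U_i)$ from Lemma~\ref{Lemma:visibility_to_inside_triangle}. The only place you go beyond the paper is the midpoint argument establishing that two hidden points cannot lie in a common triangle when $R\leq 2r$ --- the paper simply asserts this fact, so your explicit justification (including the tie-breaking remark for points on shared edges) is a welcome strengthening rather than a deviation.
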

\begin{proof} 
See Appendix.
\end{proof}
 As a result of Theorem \ref{theorem:lowerbound-by-pvgraph}, $t_{\max}$ can alternatively represent the tight lower bound to $g$. The value $t_{\max}$ in a graph is the same as the size of the largest clique in the complement graph. However, determining $t_{\max}$ is an NP-hard problem \cite{godsil2001algebraic}. 

\begin{Remark}\label{remark:smaller-R-equality}
  The positive integer $t_{\max}$ in the PV graph  decreases non-strictly as $R$ drops in a hyper triangulation from $\mathcal{HT}(R\leq 2r)$. Thus, the equality $s_{\max}= t_{\max}$ is guaranteed  by setting a small enough $R$.
\end{Remark}
   So, if $s_{\max}=t_{\max}\leq g$ holds with  equality, then $g$ is optimal. Otherwise, making a confident statement regarding the optimality of the number of deployed VLC APs is impossible.

  \pagestyle{empty}


\vspace{0mm}
\section{Deployment of VLC APs to ensure  LoS  in wireless access and backhauling}
\label{sec:5}
\vspace{-1mm}

This section investigates the minimum number of VLC APs ($h$) required to achieve LoS wireless access coverage and wireless connectivity simultaneously. Here, wireless connectivity is defined as an additional requirement for deploying VLC APs, such that each pair of these nodes become visible to each other either directly or through intermediate VLC APs. Wireless connectivity enables optical wireless backhauling, as data can be transferred from one VLC APs to another via optical wireless technology toward the core network. Potential Deployment Areas (PDAs) are defined as regions where a VLC AP can be placed, such that the network maintains LoS coverage with connectivity.
 Therefore, any change to  one PDA may affect the others. Thus, we define a tree diagram to model the interaction among PDAs. 
 \begin{table}[!t]
\caption{\small Potential Deployment Area Updating}\label{tabel:PDA_UPDATING}
\vspace{-0.2cm}
\begin{center}
\hspace{0cm}\begin{tabular}{ l | l}\hline\hline

{\footnotesize{~}}&
{\footnotesize{{\footnotesize{\textbf{Input} ~~$\mathbb{T}[\mathcal{A}_1,\mathcal{A}_2,...,\mathcal{A}_h]$ ~~~$\%$ connectivity tree.  }}}} \\

{\footnotesize{~}}&
{\footnotesize{{\footnotesize{~~~~~~~ ~~$\mathcal{R}$ ~~~$\%$  The root node in $\mathbb{T}$.}}}} \\

\hline 

{\footnotesize{1}}&
{\footnotesize{{\footnotesize{$\mathbb{P}^1, \mathbb{P}^2,..., \mathbb{P}^l\longleftarrow$  $\texttt{Paths-to-all-leaves}(\mathbb{T},\mathcal{R})$}}}} \\&

{\footnotesize{{\footnotesize{~~~~~~~~$\%$
$\mathbb{P}^i$:  Array of nodes from $\mathcal{R}$ to the $i^{th}$ leaf in $\mathbb{T}$.}}}} \\

{\footnotesize{2}}&
{\footnotesize{{\footnotesize{{\bf{For}} ~$i=1$ \bf{to} $l$ :}}}} \\

{\footnotesize{3}}&~~~~
{\footnotesize{{\footnotesize{{\bf{For}} ~$j=1$ {\bf{to}} $|\mathbb{P}^i|-1$ :$~~\%$ $\mathcal{A}^i_j$:  $j^{th}$ node in the path 
$\mathbb{P}^i$.}}}} \\

{\footnotesize{4}}&{\footnotesize{{\footnotesize{~~~~~~~~~~~~$\mathcal{A}^i_{j+1}\longleftarrow \mathcal{CR}(\mathcal{A}^i_{j})\cap \mathcal{A}^i_{j+1}$ $~~\%$ $\mathcal{A}^i_1$  is set as $\mathcal{R}$.  }}}} \\


{\footnotesize{5}}&
{\footnotesize{{\footnotesize{{\bf{Return}}  }}}} \\

\hline

\end{tabular}
\end{center}
\vspace{-0.5cm}
\end{table}

\begin{definition}
 A $\bf{Connectivity~Tree}~\mathbb{T}$ is a free tree in which each node represents one of the PDAs in the layout.
  Two nodes $\mathcal{A}_{a_1}$ and $\mathcal{A}_{a_i}$ in $\mathbb{T}$ are connected by a path of nodes $\mathcal{A}_{a_2}$,...., $\mathcal{A}_{a_{i-1}}$ if and only if the two VLC APs deployed in the PDAs $\mathcal{A}_{a_1}$ and $\mathcal{A}_{a_i}$ have connectivity via the array of $i-2$ VLC APs deployed in $\mathcal{A}_{a_2}$,..., $\mathcal{A}_{a_{i-1}}$.
\end{definition}

   PDA-updating method utilizes $\mathbb{T}$  to
iteratively remove infeasible areas within the PDAs, and therefore, to preserve connectivity among the VLC APs, see Algorithm \ref{tabel:PDA_UPDATING}.
A rooted version of~$\mathbb{T}$ is determined by finding the directional paths from a root node $\mathcal{R}$ to the leaves. The node $\mathcal{R}$ is interpreted as the PDA with the highest priority and degree of freedom.
Continuing from $\mathcal{R}$ to all the leaves, the PDA-updating method successively replaces the PDA at each node by the overlapped area with the connection region of the previous node.
 This way, we ensure that every point inside the resulting PDAs is feasible in terms of connectivity for a VLC AP to deploy.

\begin{figure*}[!t]
\centering
\hspace{0cm}\includegraphics[width=14.4cm]{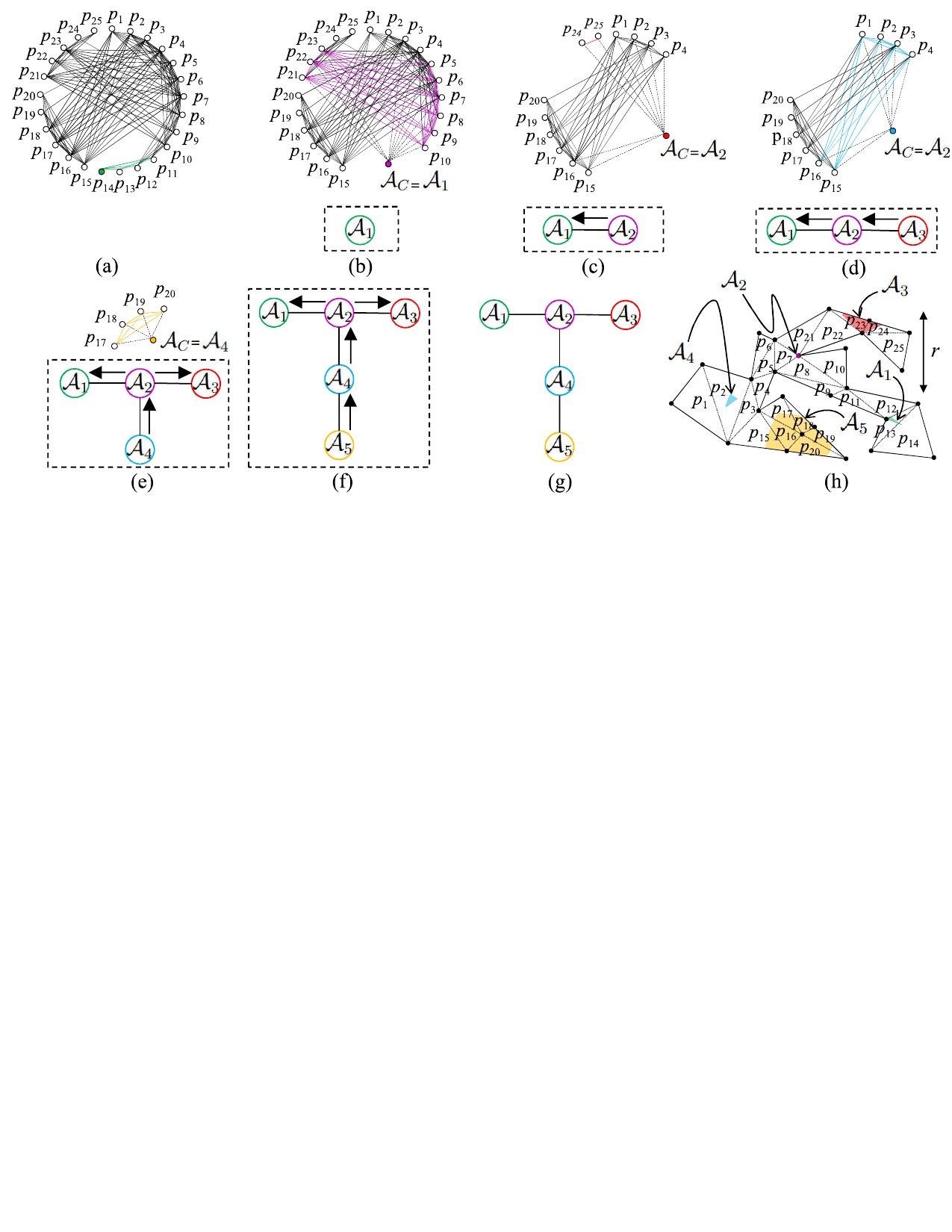}
\vspace{-11.5cm}\caption{Procedure to find the minimum number of  VLC APs  and their PDAs to satisfy LoS coverage with connectivity via CTC method: a) Determination of first maximal clique in the PV graph as the first node $\mathcal{A}_1$ in the connectivity tree; determination of the b) second; c) third; d) forth; and e) fifth node of the connectivity tree; g) Structure of the connectivity tree; and h) Representation of the nodes in the connectivity tree (known as the PDAs).   
}\label{fig:interconnectivity}
\vspace{-0.5 cm}
\end{figure*}

   
   As shown in Algorithm~\ref{tabel:create-interconnection-tree}, the set of PDAs and $\mathbb{T}$ are simultaneously built via the Connectivity Tree Construction~(CTC) method, which utilizes the PDA-updating as an inner algorithm.
   In CTC method, the visibility area of the first maximal clique $\mathcal{V}(c)$ in the PV graph is initially assigned as the first PDA $\mathcal{A}_1$, which also forms the first node  in $\mathbb{T}$.
    Removing  $c$ from the PV graph, the rest of the method is summarized as follows: I) Among the current nodes in $\mathbb{T}$, find the node $\mathcal{A}_C$ such that $\mathcal{CR}(\mathcal{A}_C)$ has a non-empty overlap with the visibility area of the node in the PV graph with the smallest degree; II) Create the node $\mathcal{A}_C$ within the remaining PV graph and connect it to the nodes whose visibility areas have a non-empty overlap with $\mathcal{CR}(\mathcal{A}_C)$; III) While searching the nodes over the remaining PV graph in an ascending degree order, find the largest clique $c$  that is fully connected to $\mathcal{A}_C$ while satisfying $\mathcal{CR}(\mathcal{A}_C )\cap\mathcal{V}(c)\neq \emptyset$; IV) Create the new PDA $\mathcal{A}_{|\mathbb{T}|+1}:=\mathcal{CR}(\mathcal{A}_C )\cap\mathcal{V}(c)$ as a node in  $\mathbb{T}$  and connect it to  $\mathcal{A}_C$; V) Run the PDA-updating algorithm for the current $\mathbb{T}$ by assigning $\mathcal{R}:=\mathcal{A}_{|\mathbb{T}|+1}$; VI) Remove $\mathcal{A}_C$ and $c$ from the PV graph and start over until no node in the PV graph remains.

 Figure~\ref{fig:interconnectivity} illustrates the CTC steps  applied to the sample layout with cell range $r$. 
 Fig.~\ref{fig:interconnectivity}(a) shows the PV graph with the first maximal clique $c$ highlighted in green, while creating the node $\mathcal{A}_1:=\mathcal{V}(c)$ in $\mathbb{T}$  as well as creating the node $\mathcal{A}_C:=\mathcal{A}_1$ in the remaining PV graph, see Fig.~\ref{fig:interconnectivity}(b).
 Then, we connect $\mathcal{A}_C$ to the nodes whose visibility areas overlap with $\mathcal{CR}(\mathcal{A}_C)$, resulting in the discovery of the new maximal clique $c$ in purple.
 In Fig.~\ref{fig:interconnectivity}(c), we create the node $\mathcal{A}_2:=\mathcal{CR}(\mathcal{A}_1)\cap \mathcal{V}(c)\neq \emptyset$ in $\mathbb{T}$  and connect it  to the node $\mathcal{A}_C$ ($\mathcal{A}_1$ here). Now, running the PDA-updating algorithm for $\mathbb{T}$ with the root node $\mathcal{R}:=\mathcal{A}_2$, we update  $\mathcal{A}_1:=\mathcal{A}_1 \cap  \mathcal{CR}(\mathcal{A}_2)$.
 Since $\mathcal{CR}(\mathcal{A}_2)$ overlaps with the visibility area of the smallest degree node in the remaining PV graph, we create the node $\mathcal{A}_C:=\mathcal{A}_2$ as in Fig.~\ref{fig:interconnectivity}(c).
 Following the same steps, we arrive at the final $\mathbb{T}$ in Fig.~\ref{fig:interconnectivity}(g), where Fig.~\ref{fig:interconnectivity}(h) illustrates the five PDAs associated with it.

When $\mathbb{T}$ is created via the CTC method, the number of VLC APs required to provide LoS coverage and connectivity equals the number of PDAs, i.e., $h=|\mathbb{T}|$. So, deployment of $h=|\mathbb{T}|=5$ VLC APs in some of the points within the $5$ PDAs in Fig.~\ref{fig:interconnectivity}(h) is therefore sufficient for LoS coverage with connectivity.
 The following shows the steps in VLC AP deployment:
 I) Deploy a VLC AP anywhere in an arbitrary PDA from the set of PDAs $\mathcal{A}_1, \mathcal{A}_2,..., \mathcal{A}_h$. Let the PDA and the deployment point be $\mathcal{A}_i$ and $X_i$, respectively;
II)  Assign the PDA by the point $\mathcal{A}_i:=X_i$; III) Assign the root node $\mathcal{R}:=\mathcal{A}_i$ in $\mathbb{T}$ and update the PDAs using Algorithm \ref{tabel:PDA_UPDATING}; IV) Repeat the process until every PDA contains a VLC AP. Now, the set of VLC APs satisfy LoS coverage with connectivity.




Presuming two adjacent nodes $\mathcal{A}_j$ and $\mathcal{A}_k$ in $\mathbb{T}$ created by the CTC, the PDA-updating algorithm guarantees that $X_j \in \mathcal{CR}(\mathcal{A}_k)$ and $X_k \in \mathcal{CR}(\mathcal{A}_j)$ for any points $X_j\in \mathcal{A}_j$ and $X_k\in \mathcal{A}_k$. Due to this, no PDA becomes empty during the deployment of VLC APs. However, a lower bound to $h$ has yet to be determined.


 \begin{Remark}\label{remark:interconnection-lowerbound}
   Suppose there is a set of $s$ points $U_1$, $U_2$,..., $U_s$ in a layout and a set of non-negative integers $b_1$, $b_2$,..., $b_s$ such that $\mathcal{CR}^{b_1}(\mathcal{V}(U_1))$, $\mathcal{CR}^{b_2}(\mathcal{V}(U_2))$,..., $\mathcal{CR}^{b_s}(\mathcal{V}(U_s))$ form a set of pairwise disjoint areas, where $\mathcal{CR}^{b_i}(\mathcal{V}(U_i))\equiv \mathcal{CR}(...\mathcal{CR}(\mathcal{CR}(\mathcal{V}(U_i))...)$ denotes a $b_i$ times composite function. Then, the inequality $s+\sum_{i=1}^s b_i \leq h $  indicates a lower bound to the number of VLC APs that satisfies LoS coverage with connectivity.
 \end{Remark}

From Remark \ref{remark:interconnection-lowerbound}, we can derive the tight lower bound to $h$ by maximizing $s+\sum_{i=1}^s b_i $ over $s$, location points $U_1$, $U_2$,..., $U_s$, as well as $b_1$, $b_2$,..., $b_s$. 
 Hence, Remark \ref{remark:interconnection-lowerbound} can be viewed as a
  special case of  Lemma \ref{Lemma:lowerbound}, wherein $b_i=0$ and therefore $\mathcal{CR}^{0}(\mathcal{V}(U_i))=\mathcal{V}(U_i)$ for all $i$.

 \begin{table}[t!]
\caption{ Connectivity Tree Construction}\label{tabel:create-interconnection-tree}
\vspace{-0.5cm}
\begin{center}
\hspace{0cm}\begin{tabular}{ l | l}\hline\hline



{\footnotesize{~}}&
{\footnotesize{{\footnotesize{\textbf{Initialize}~~~~$\mathbb{T}\longleftarrow \emptyset$ ~~~~~~~~$\%$  Connectivity tree }}}} \\

{\footnotesize{~}}&{\footnotesize{{\footnotesize{~~~~~~~~~~~~~~~$c $  $~~~~~\%$  The first maximal clique in $\mathbb{G}$, Table \ref{tabel:Max_Clique_partitioning}   }}}} \\
\hline


{\footnotesize{1}}&{\footnotesize{{\footnotesize{$\mathcal{A}_1\longleftarrow \mathcal{V}(c)$  $~~~~~~~~~~~~~~~~~~\%$  The first PDA }}}} \\

{\footnotesize{2}}&{\footnotesize{{\texttt{Create the first node $\mathcal{A}_1$ in $\mathbb{T}$ } }}} \\

{\footnotesize{3}}&{\footnotesize{{\footnotesize{$\mathbb{G}\longleftarrow \mathbb{G} - c$    }}}} \\

{\footnotesize{4}}&{\footnotesize{{\footnotesize{$M\longleftarrow |\mathbb{G}|$     }}}} \\

{\footnotesize{5}}&
{\footnotesize{{\footnotesize{{\bf{While}} ~~$\mathbb{G}\neq \emptyset$: }}}} \\

 {\footnotesize{6}}&{\footnotesize{{\footnotesize{~~~~$\mathbb{G}[p'_1,p'_2,..., p'_M]\leftarrow$  $\texttt{Asc-Sort}(\mathbb{G})$  }}}} \\
 
 {\footnotesize{7}}&{\footnotesize{{\footnotesize{~~~~$k\longleftarrow \text{min}~~~ i, ~~ s.t. ~~\mathcal{V}(p'_i) \cap \big(\bigcup_{\mathcal{A}_j\in \mathbb{T}}\mathcal{CR}(\mathcal{A}_j)\big)\neq \emptyset $ }}}} \\

{\footnotesize{8}}&{\footnotesize{{\footnotesize{~~~~$\mathcal{A}_C\longleftarrow \underset{\mathcal{A}_j \in \mathbb{T}}{\text{argmax}}~ \sum_{i=1}^M sgn(|\mathcal{V}(p'_k)\cap\mathcal{CR}(\mathcal{A}_j)\cap \mathcal{V}(p'_i)|)$ }}}} \\

{\footnotesize{9}}&{\footnotesize{{\footnotesize{~~~ \texttt{Create the node $\mathcal{A}_C$ in $\mathbb{G}$}  }}}} \\

{\footnotesize{10}}&
{\footnotesize{{\footnotesize{{~~~~{\bf{For}}~~$i=1$\texttt{ to} $M$:}}}}} \\

{\footnotesize{11}}&{\footnotesize{{\footnotesize{~~~~~~~ \bf{If}~~$\mathcal{CR}(\mathcal{A}_C)\cap \mathcal{V}(p'_i)\neq \emptyset$}:}}} \\

   {\footnotesize{12}}&
{\footnotesize{{\footnotesize{~~~~~~~~~~~~ $\texttt{Con}(\mathbb{G},p'_i,\mathcal{A}_C)$ ~~$\%$ Connect $p'_i$ and $\mathcal{A}_C$ in $\mathbb{G}$ }}}} \\


 
 {\footnotesize{13}}&
{\footnotesize{{\footnotesize{~~~~$c\longleftarrow \emptyset$ }}}} \\

 {\footnotesize{14}}&{\footnotesize{{\footnotesize{~~~~{\bf{For}}~~ $i=1$\texttt{ to} $M$: }}}} \\

 {\footnotesize{15}}&
{\footnotesize{{\footnotesize{~~~~~~~~{\bf{If}}~~\texttt{$\mathcal{A}_C$, $p'_i$, and $c$ form a clique}:}}}} \\

 {\footnotesize{16}}&
{\footnotesize{{\footnotesize{~~~~~~~~~~~~{\bf{If}}~~  $\mathcal{CR}(\mathcal{A}_C )\cap \mathcal{V}(p'_i) \cap  \mathcal{V}(c)\neq \emptyset$:}}}} \\

{\footnotesize{17}} & {\footnotesize{~~~~~~~~~~~~~~~~~$c\leftarrow  c \cup p'_i$  }}\\ 


{\footnotesize{18}}&
{\footnotesize{{\footnotesize{~~~  $\mathcal{A}_{|\mathbb{T}|+1}\longleftarrow \mathcal{CR}(\mathcal{A}_C )\cap\mathcal{V}(c)$   }}}} \\

 {\footnotesize{19}}&
{\footnotesize{{\footnotesize{~~~ 
\texttt{Create the node $\mathcal{A}_{|\mathbb{T}|+1}$ in $\mathbb{T}$ }}}}} \\

 {\footnotesize{20}}&
{\footnotesize{{\footnotesize{~~~  $\texttt{Con}(\mathbb{T},\mathcal{A}_{|\mathbb{T}|+1},\mathcal{A}_C) $  }}}} \\

{\footnotesize{21}}&
{\footnotesize{{\footnotesize{~~~~$\mathcal{A}_{1},...,  \mathcal{A}_{|\mathbb{T}|}\longleftarrow \texttt{PDA-Update}(\mathbb{T},\mathcal{R}=\mathcal{A}_{|\mathbb{T}|+1})$~$\%$ Table \ref{tabel:PDA_UPDATING} }}}} \\


{\footnotesize{22}}&
{\footnotesize{{\footnotesize{~~~~$\mathbb{G}\leftarrow \mathbb{G}-(\mathcal{A}_C\cup c)$ }}}} \\

{\footnotesize{23}}&
{\footnotesize{{\footnotesize{~~~~$M\leftarrow |\mathbb{G}|$}}}} \\


{\footnotesize{24}}&
{\footnotesize{{{\footnotesize{{\bf{Return}}  }}}}}\\

\hline

\end{tabular}

\end{center}
\vspace{-3mm}
\end{table}

\vspace{0mm}
\section{Simulation results}
\label{sec:6}
\vspace{-0mm}



To assess the effectiveness of the proposed deployment method for optical wireless networks, we first present various simulation scenarios along with their corresponding parameter values. Next, we utilize the proposed algorithms to generate a visual representation of the optimal number of VLC APs and their optimal locations over an actual floor plan. Finally, we evaluate the performance of the proposed algorithms with respect to data rates and illumination levels.

\subsection{Simulation scenarios and parameters}
To compare our results and those obtained by conventional methods with long ($r=10$ m), medium ($r=3$ m), and short ($r=2$ m) maximum ranges, we applied all three methods to the ground floor plan of the Jewish Museum in London, which contains $n=67$ vertices arranged within an area of approximately $30 \times 35$\,m, see Fig.~\ref{fig:simulation-coverge-vs-connectivity}. We employ the values for the VLC network parameters presented in Table \ref{Table:parameter-values}.
   
        
       


   To analyze data rate and illumination key performance indicators, we 
   average over randomly placed user locations in the museum layout.
   Moreover, we assume two power allocation schemes in the VLC system. In the first scheme, each VLC AP emits with a power of $P_{\rm LED} = 40$\,W, whereas in the second scheme all VLC APs share evenly a total power of $P_{\rm tot} = 1$\,kW. These values only serve to compare the performance and are not suggested for real implementations. We also assume that any pair of VLC APs with a mutual LoS condition can communicate via an optical wireless backhaul link at distances beyond the size of the museum. 


\begin{figure*}[!t]
\centering
\advance\leftskip-1.8cm
\advance\rightskip-1cm
\includegraphics[width=19cm]{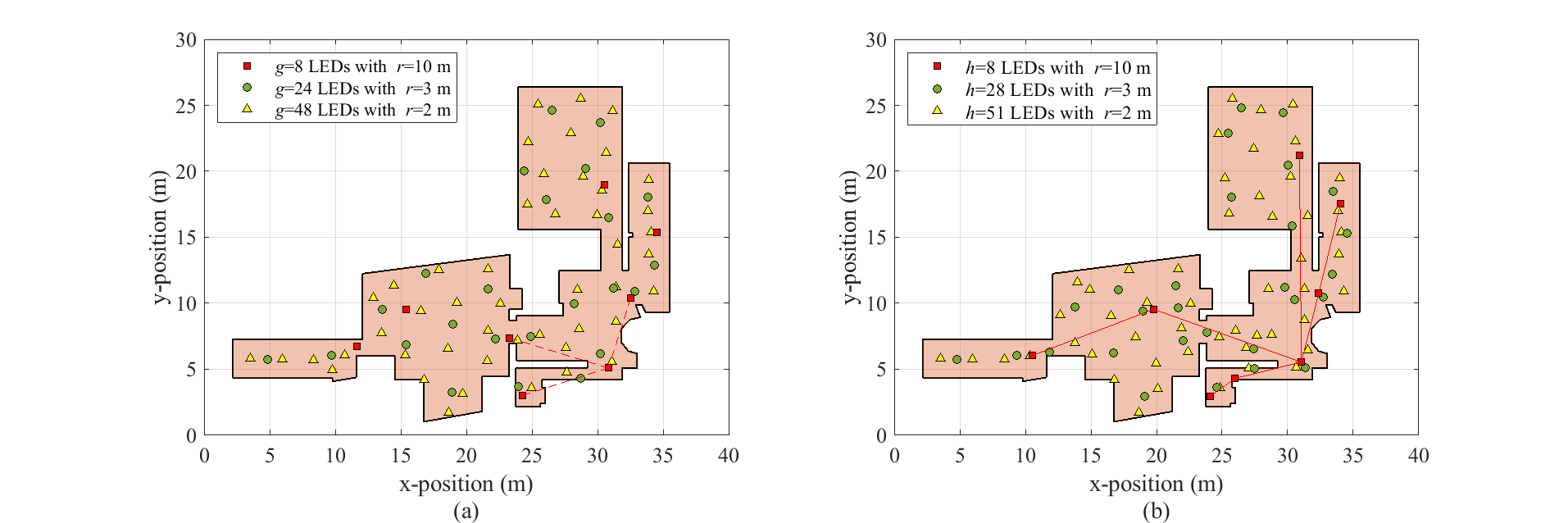}
\vspace{-0.3cm}\caption{  Deployments of LED-based VLC APs for the given museum layout  with three maximum ranges to satisfy: a) LoS coverage via MCC deployment; b) LoS coverage with connectivity via CTC method. }\label{fig:simulation-coverge-vs-connectivity}
\vspace{-0.5 cm}
\end{figure*}

\begin{figure*}[!t]
\centering
\advance\leftskip-1.8cm
\advance\rightskip-1cm
\includegraphics[width=19cm]{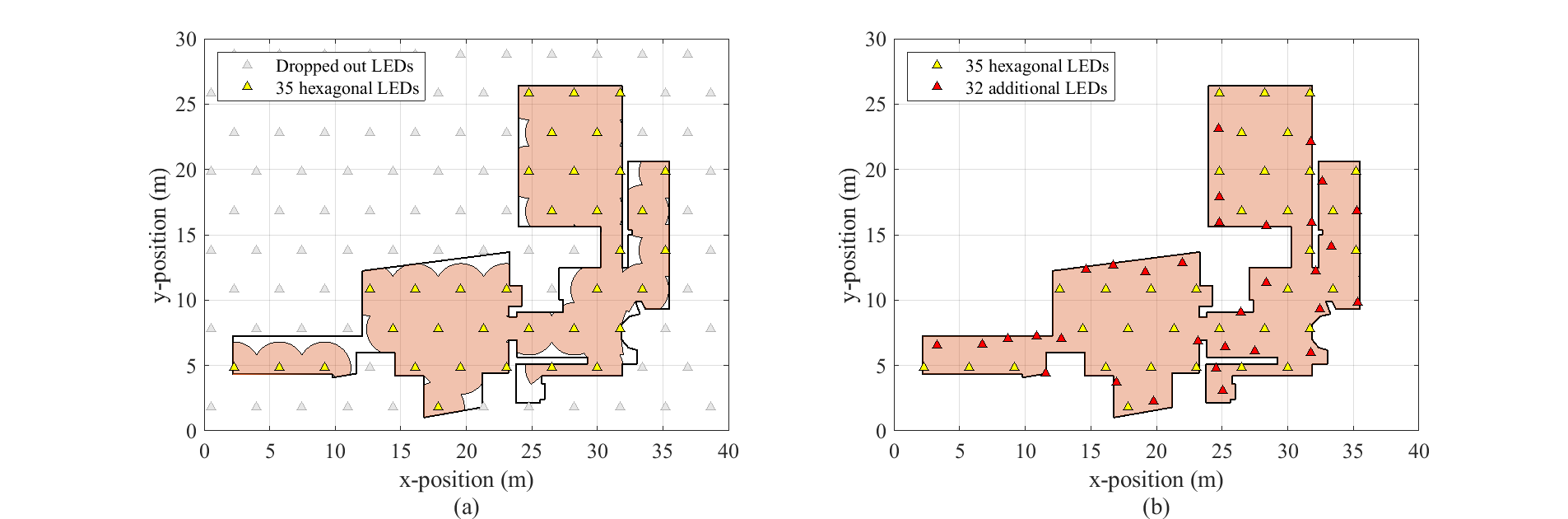}
\vspace{-0.3cm}\caption{  Conventional LED-based VLC AP deployments applied to the museum layout with the maximum cell range $r=2$\,m via: a) HEX deployment; and b) HEX+ deployment with LoS coverage. }\label{fig:simulation-conventional}
\vspace{-0.4 cm}
\end{figure*}

\subsection{Statistical analysis of the number of VLC APs required in different deployment strategies}
 Figure~\ref{fig:simulation-coverge-vs-connectivity}(a) illustrates the deployment of the minimum number of VLC APs required for LoS coverage of the museum via the MCC method.
With $r=10$\,m, the MCC method suggests $g=8$ VLC APs, shown in red squares in the locations of the center points of the eight resulting clique visibility areas.
The same procedure is applied when reducing the range to $r=3$  and $2$\,m, with the result being $g=24$ and $g=48$ VLC APs represented by green circles and yellow triangles, respectively.
Hence, more VLC APs are observed with a smaller cell range to ensure LoS coverage. In spite of this, the dashed red lines between the four red squares show that the MCC method does not guarantee connectivity among all VLC APs. Fig.~ \ref{fig:simulation-coverge-vs-connectivity}(b) represents the results of the CTC method in determining the minimum number of VLC APs and their placement for LoS coverage with connectivity.
By setting $r=10$\,m, we obtain a total of $h=8$ VLC APs covering the entire area and maintaining connectivity (solid red lines). 
Furthermore, the CTC method allocates $h=28$ and $h=51$ VLC APs when $r=3$\,m  and $r=2$\,m, respectively.
 So, there is no significant difference between MCC and CTC regarding the number of VLC APs. It is also observed that relocation of the VLC APs is sometimes sufficient to satisfy LoS coverage and connectivity, such as in the case of cell range $r=10$\,m.

   \begin{figure}[!t]
\vspace{-0.2cm}\includegraphics[width=8cm]{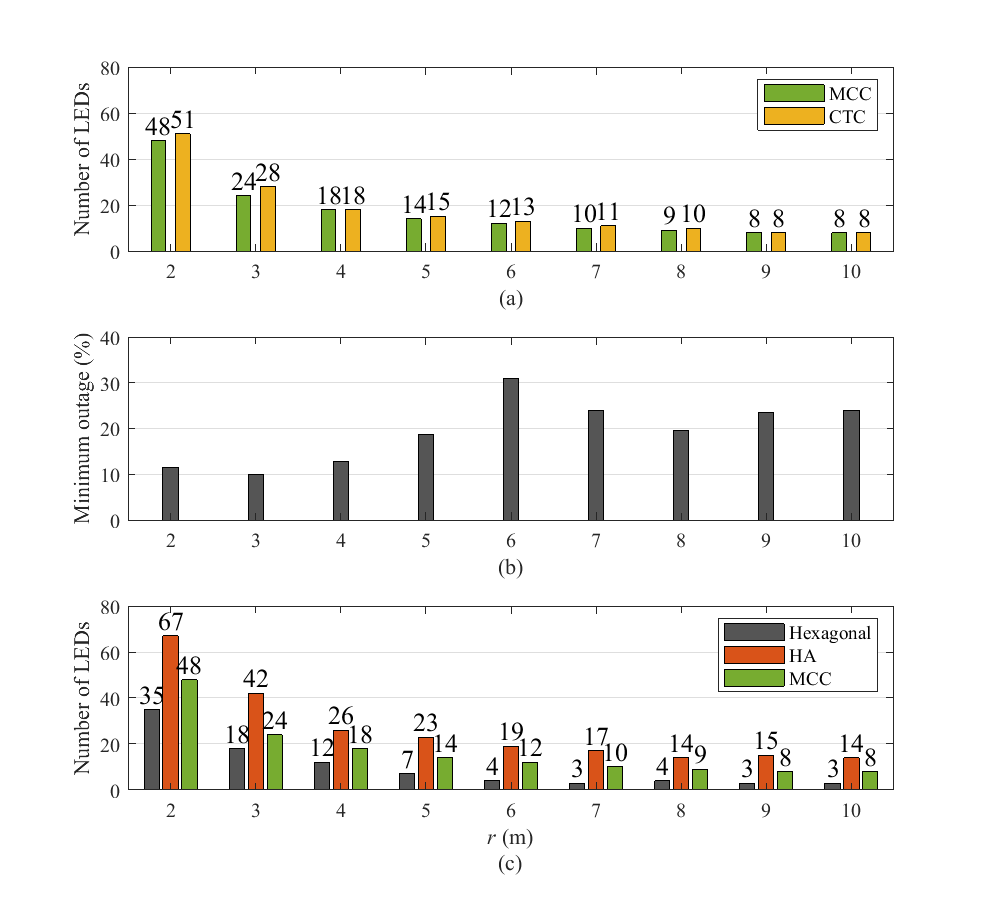}
\vspace{-0.5cm}\caption{  Performance evaluation of the LED-based VLC AP deployment methods applied to the museum layout: a) Comparison of the number of VLC APs resulting from MCC and CTC; b) minimum outage resulting from HEX deployment; and c) comparison of the number of VLC APs resulting from the HEX, HEX+, and MCC deployments. }\label{fig:simulation-bargraphs}\vspace{0cm}
\vspace{-0.3 cm}
\end{figure}

 Next, we demonstrate two representative conventional indoor VLC AP deployments, in which the \emph{HEX deployment} (see Fig.~\ref{fig:squar-hexagonal}) refers to the method of placing the layout over the hexagonal cells such that the placement of VLC APs minimize the indoor areas in outage. Here, Fig.~\ref{fig:simulation-conventional}(a) illustrates the HEX deployment with $r=2$\,m, in which $35$ VLC APs shown by yellow triangles are located in the indoor service area, while the gray triangles indicate the VLC APs outside the area. 
  The white areas within the layout mark the outage zones.

  \setcounter{table}{0}  
\vspace{0cm}
\renewcommand{\tablename}{Table} 
\begin{table}
\centering
\advance\leftskip-1cm
\advance\rightskip-1cm
\caption{List of parameter values }\label{Table:parameter-values}
\begin{tabular}{ |p{0.8cm}||p{3.7cm}|p{1.4cm}| p{1.1cm}| }
 \hline
 \hline
 \small{Symbol} & ~~~~~~~~ ~~~~\small{Name}  & ~~~\small{Value} &~~ \small{Unit}\\
 \hline
~~\small{$B$}   &  \small{LED transmission bandwidth}   & ~~~~\small{10} & \small{~[MHz]}\\
~~~\small{$\theta_{\max}$}&   \small{Semi-angle of the PC-LED} & ~~~\small{$\pi/3$} & ~~\small{[rad]}   \\
~~\small{$A_{\text{pd}}$} & \small{Active area of the PD}  & ~~~\small{75.44} & \small{~~[mm$^2$]}\\
\small{$h_{\text{LED-PD}}$}  & \small{LED to PD vertical distance} & ~~~~$2.5$ & ~~~\small{[m]}\\
~~\small{NEP} & \small{Noise equivalent power}   & \small{\mbox{$3.17{\rm{e-10}}$}} & \small{[W/$\sqrt{\text{Hz}}$]}  
 \\
 \hline
\end{tabular}
\vspace{-5mm}
\end{table}

 In the \emph{HEX+ deployment} method, the HEX-deployed VLC APs are coupled with the fewest additional VLC nodes to cover the entire layout. This is accomplished by deploying each additional VLC AP in such a way that it covers the largest portion of the outage zone. 
 In Fig.~\ref{fig:simulation-conventional}(b), the $32$ red triangles represent additional VLC APs, on top of the HEX-deployed ones, that jointly provide LoS coverage on the whole museum layout.
  Consequently, $35+32=67$ VLC APs with $r=2$\,m are required for LoS coverage when using HEX+.
  As a result, the HEX deployment leaves a large number of zones uncovered, while HEX+ significantly increases the total number of VLC APs. Later, we demonstrate that the interference caused by additional VLC APs degrades the data rate of users.

Figure~\ref{fig:simulation-bargraphs} compares the performance of the  deployment methods when the cell range varies between $r=2$\,m and $10$\,m.
  Fig. \ref{fig:simulation-bargraphs}(a) shows that, compared to the MCC, CTC deployment does not significantly increase the number of VLC APs. Meanwhile, Fig. \ref{fig:simulation-bargraphs}(b) represents the minimum outage percentage of the HEX deployment, where it is observed that the outage probability grows with $r$, and can be as high as $30\%$ with $r=6$\,m.
  Fig.~\ref{fig:simulation-bargraphs}(c) illustrates the number of VLC APs for the three deployment methods. Although HEX requires fewer \mbox{VLC APs}, it does not cover the entire layout. Finally, it is observed that the MCC method reduces the number of required VLC APs by up to $50\%$, as compared to HEX+. 

In Figure~\ref{fig:simulation-random-layout}, we compare all the deployment methods in $100$ random layouts with $n=100$ vertices each within a $30 \times 30$\,m  square area. To create random layouts, we used \emph{inward denting}, as described in \cite{gewali2015constructing}. 
In this method, a Delaunay triangulation is constructed by randomly generating a large number of points and forming a convex polygon at the boundaries. Then, if the polygon has less than $n$ edges, we randomly remove a triangular facet whose sides share only one edge with the boundaries. This increases the number of boundary edges. Otherwise, we randomly pick a triangular facet that shares only one edge with the rest of the triangulation to reduce the number of boundary edges. We continue this procedure to achieve $n$ edges.

  \begin{figure}[!t]
\centering
\hspace{-0.5cm}\includegraphics[width=8cm]{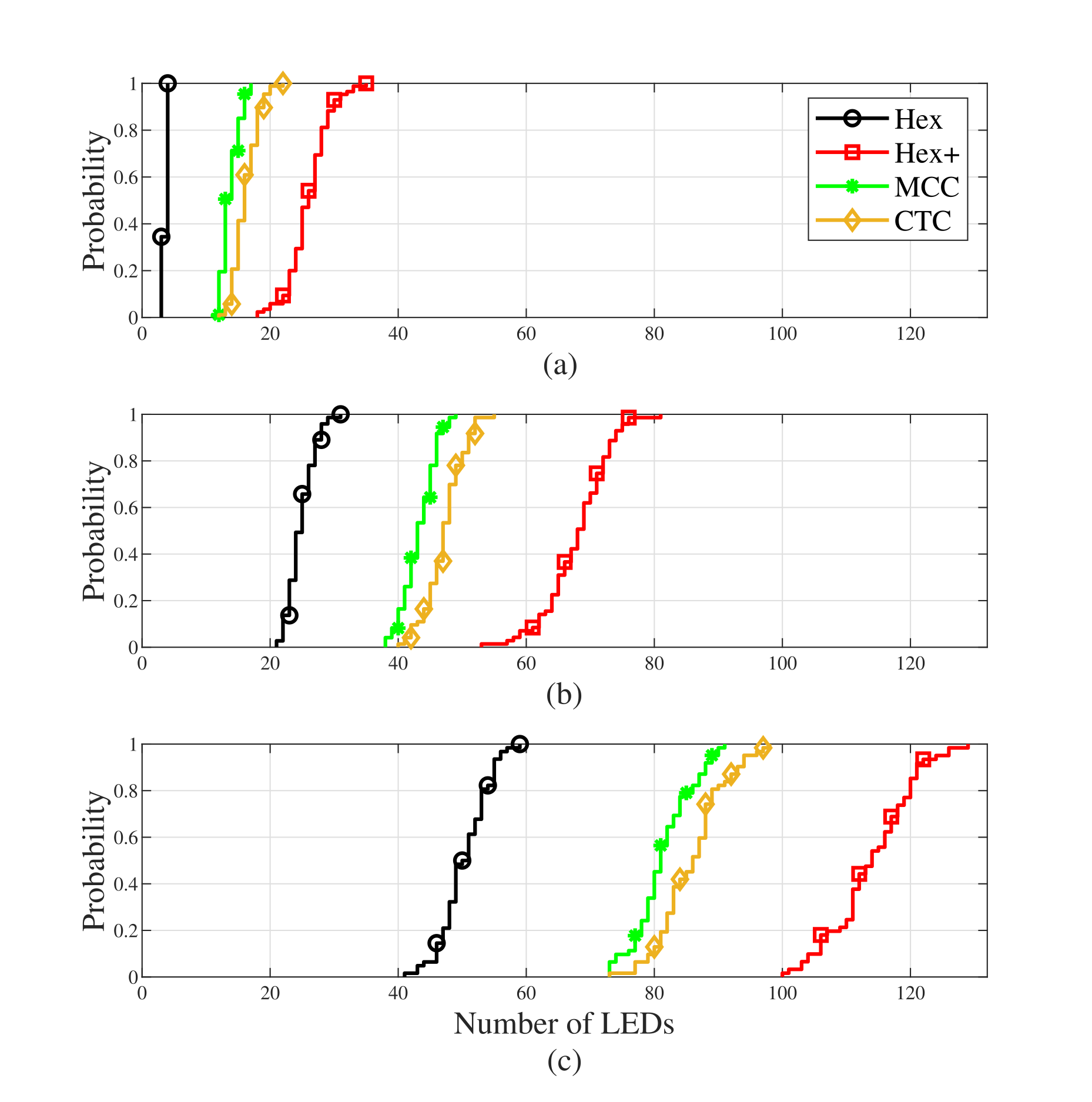}
\vspace{-0.5cm}\caption{   Cumulative distribution functions of the number of VLC APs, applied to $100$ random layouts, via different deployment methods with maximum ranges, namely: a) $r=10$\,m; b) $r=3$\,m; and c) $r=2$\,m. }\label{fig:simulation-random-layout}
\vspace{-0.5 cm}
\end{figure}

When using the HEX deployment, as in Fig.~\ref{fig:simulation-random-layout}(a) with $r=10$\,m, three or four VLC APs fall into the layout.
In HEX+ deployment, the number of required VLC APs varies between $18$ and $36$, while the MCC method shows $g=12$ to $g=18$ VLC APs, indicating a decrease of $40\%$ on average.
The CTC method, on the other hand, requires a slightly larger number of VLC APs than the MCC method to satisfy LoS coverage with connectivity. A similar pattern is observed in Fig.~\ref{fig:simulation-random-layout}(b) and Fig.~\ref{fig:simulation-random-layout}(c), with a larger number of VLC APs when $r=3$\,m and $r=2$\,m are used, respectively. 
As a result, the MCC and CTC deployment methods statistically result in similar numbers of VLC APs which are considerably lower than the ones using HEX+ method.

   \begin{figure*}[!t]
\centering
\advance\leftskip-1cm
\advance\rightskip-1cm
\includegraphics[width=15cm]{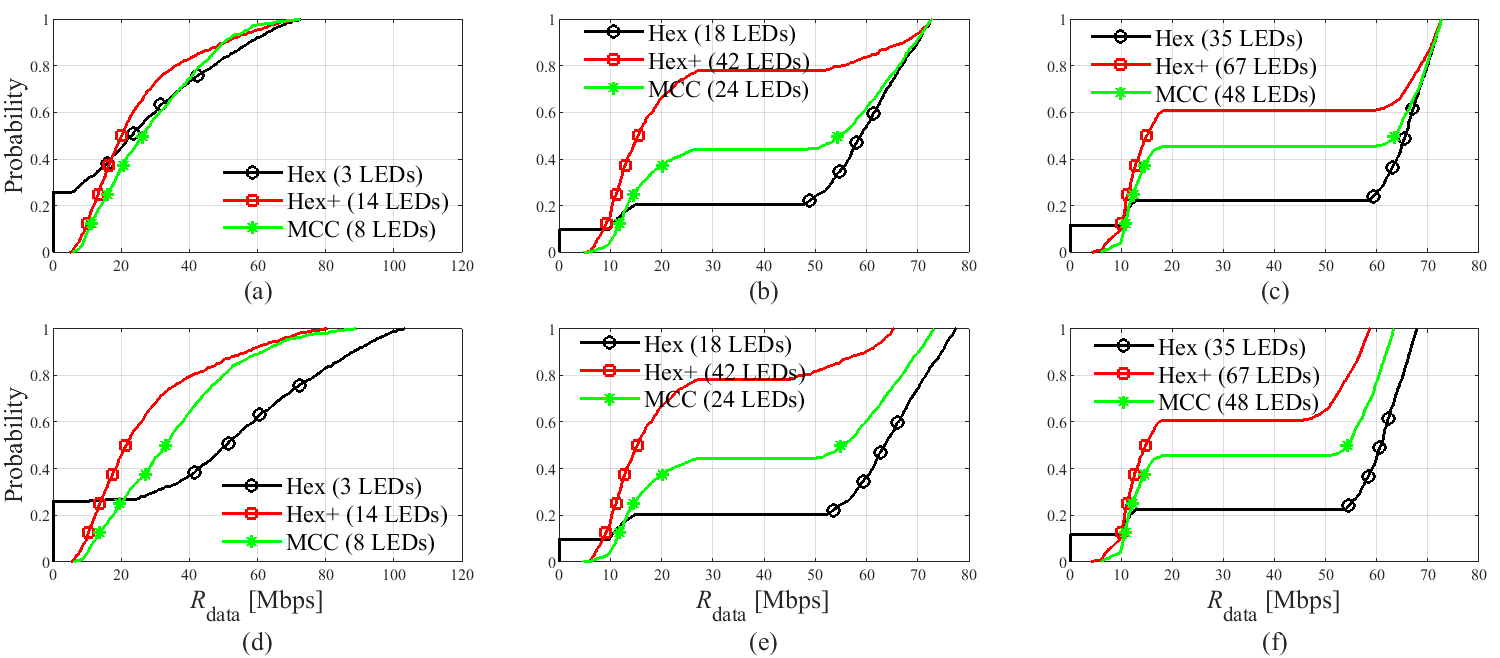}
\vspace{-0.35cm}\caption{
 Data rate CDFs of studied deployment methods applied to the sample museum layout. Fixed LED power $P_{\rm LED}=40$\,W with VLC cell range: a) $r=10$\,m; b) $r=3$\,m; and c) $r=2$\,m. Fixed total LED power $P_{\rm tot}=1000$\,W with VLC cell range: d) $r=10$\,m; e) $r=3$\,m; and f) $r=2$\,m. }\label{fig:simulation-SINR}
\vspace{-0.2 cm}
\end{figure*} 

  \subsection{Statistical analysis of data rates and illumination provided in different deployment strategies}
  
Figure \ref{fig:simulation-SINR} shows the CDF plots of data rates, using (\ref{eq:data-rate}), for a user (PD) with uniformly distributed locations within the indoor service area defined by the layout.
    Fig. \ref{fig:simulation-SINR}(a)-(c) illustrate the CDF of the data rate for three deployment methods with a fixed LED power $P_{\rm LED}=40$\,W and $r=10$\,m, $r=3$\,m, and $r=2$\,m, respectively.
It is observed in Fig.~\ref{fig:simulation-SINR}(a) that the  eight \mbox{VLC APs} that result from MCC outperform the fourteen VLC APs from HEX+ by around $20\%$.  
The HEX deployment can yield high data rates, but then almost $25\%$ of the museum's area is in outage.
In addition, it can be seen that the MCC method outperforms HEX+ method with less VLC APs for $r=3$\,m and $r=2$\,m.
In these cases, although the HEX method improves the mean data rates, about $12\%$ of the museum area is in outage.

    The flat segments in the CDFs originate from the notable differences in data rates observed between the locations where a user does not experience interference (i.e., there is LoS condition to only one VLC AP) and the ones that experience interference (LoS condition to more than one VLC APs).
    According to Fig.~\ref{fig:simulation-SINR}(b), a user in approximately $80\%$ of the museum area receives interference with HEX+ deployment, while the value is reduced to $40\%$ for MCC.
    Then, when setting a fixed total power $P_{\rm tot}=1000$\,W, Fig.~\ref{fig:simulation-SINR}(d)-(f) show similar results for the three cell ranges under analysis.

     \begin{figure*}[!t]
\centering
\advance\leftskip 0cm
\advance\rightskip-1.2cm
\hspace{-1cm}\includegraphics[width=15.2cm]{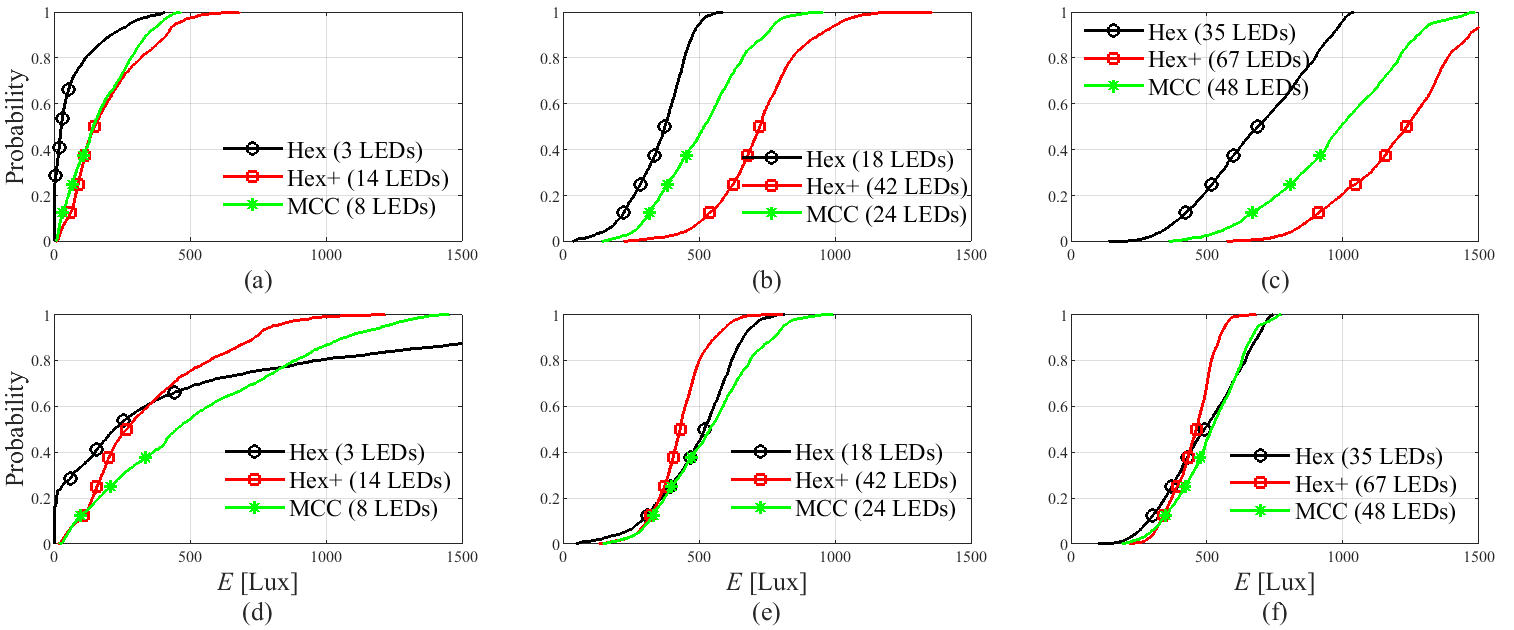}
\vspace{-0.3cm}\caption{ Illumination CDFs of studied deployment methods applied to the sample museum layout. Fixed LED power $P_{\rm LED}=40$\,W with VLC cell range: a) $r=10$\,m; b) $r=3$\,m; and c) $r=2$\,m. Fixed total LED power $P_{\rm tot}=1000$\,W with VLC cell range: d) $r=10$\,m; e) $r=3$\,m; and f) $r=2$\,m. }\label{fig:simulation-Illumination}
\vspace{-0.4 cm}
\end{figure*}

    In Figure~\ref{fig:simulation-Illumination}, the effects of the different deployment methods are illustrated from the perspective of illumination, wherein the        VLC APs provide additive contribution in terms of illumination at all locations with a LoS condition using (\ref{eq:illumination}).
       By employing fixed LED power $P_{\rm LED}=40$\,W, Fig. \ref{fig:simulation-Illumination}(a)-(c) indicate a stronger illumination when using HEX+ as a result of the larger number of VLC APs. 
        When employing fixed total LED power $P_{\rm tot}=1000$\,W, the deployment methods presented in Fig.~\ref{fig:simulation-Illumination}(d)-(f) do not significantly differ in average illumination for the three VLC cell ranges under analysis. 

     Based on the CDF plots in Fig.~\ref{fig:simulation-SINR} and Fig.~\ref{fig:simulation-Illumination}, we collect the representative data rates and illumination metrics in Table \ref{Table:Data_rate_illumination}. Here, $R_{\rm mean }$ and $E_{\rm avg}$ are defined as the average data rate and average illumination received by the user, respectively. 
     User experienced minimum data rate $R_{\rm min}$ is defined as $5\%$ percentile point of user throughput according to ITU~\cite{ITU2017}.
     In addition, we define the illuminance uniformity as the $10$-th percentile of illumination $E_{0.1}$ over the average illumination, $U_\text{E}=E_{0.1}/E_\text{avg}$, see\cite{dowh2020a}.
       Here, HEX deployment is observed to result in a zero minimum data rate as a result of the outage, as well as a low level of illumination throughout the museum due to the small number of VLC APs. Because there is a small overlap in the coverage areas of the VLC cells, interference is minimized, and thus, Hex outperforms the other deployments in terms of average data rate.
       HEX+ deployment, on the other hand, improves the illumination but deteriorates the data rate metrics as a result of large overlapping areas between VLC cells, which leads to stronger co-channel interference.
    Finally, we observe that the MCC deployment avoids zero minimum data rates while maintaining a moderate level of average data rate and illumination metrics. 
    We observe that the MCC method outperforms HEX+, in particular when it comes to data rate metrics, regardless of the maximum range and the power allocation scheme. 
    Finally, we observe that given a network of LEDs, the choice of power allocation scheme has a significant impact on the average illumination of the area, but has no significant influence on data rate metrics.
       
\renewcommand{\tablename}{Table}
\begin{table*}
\centering
\caption{ Data rates and illumination performances in the museum layout for conventional and proposed VLC AP deployment methods. \vspace{-0.2cm}}\label{Table:Data_rate_illumination}

\begin{tabular}{|p{.039\textwidth}|p{.12\textwidth}||p{.035\textwidth}|p{.035\textwidth}|p{.035\textwidth}|p{.035\textwidth}||p{.035\textwidth}|p{.035\textwidth}|p{.035\textwidth}|p{.035\textwidth}||p{.035\textwidth}|p{.035\textwidth}|p{.035\textwidth}|p{.035\textwidth}|}
\hline\multirow{3}{*}{\small{$r$ [$\rm m$]} } & \multirow{3}{*}{\shortstack[l]{\footnotesize{Power allocation}\\ \footnotesize{~~~~~scheme}}} & \multicolumn{4}{c|}{\small{Hex deployment}} & \multicolumn{4}{c|}{\small{Hex+ deployment}}  & \multicolumn{4}{c|}{\small{ MCC deployment}} \\

\cline{3-14}
 & & \multicolumn{2}{c|}{\shortstack[l]{\footnotesize Data rate}} & \multicolumn{2}{c|}{\shortstack[l]{\footnotesize Illumination }} &
 \multicolumn{2}{c|}{\shortstack[l]{\footnotesize Data rate}} & \multicolumn{2}{c|}{\shortstack[l]{\footnotesize Illumination}} &
 \multicolumn{2}{c|}{\shortstack[l]{\footnotesize Data rate}} & \multicolumn{2}{c|}{\shortstack[l]{\footnotesize Illumination }}   \\
 
 \cline{3-14}
 & &{\shortstack[l]{{\hspace{-0.1cm}\scriptsize{$R_{\text{min}}$}}\\ \hspace{-0.1cm}\scriptsize [Mbps]}} & {\shortstack[l]{{\hspace{-0.15cm}\scriptsize{$R_{\text{mean}}$}}\\ \hspace{-0.1cm}\scriptsize [Mbps]}} &{\shortstack[l]{{\hspace{0.1cm}\scriptsize{$U_{\text{E}}$}}\\ \hspace{0.1cm}\scriptsize [1]}}& {\shortstack[l]{{\hspace{-0.05cm}\scriptsize{$E_{\text{avg}}$}}\\ \hspace{-0.05cm}\scriptsize [Lux]}}&
 {\shortstack[l]{{\hspace{-0.1cm}\scriptsize{$R_{\text{min}}$}}\\ \hspace{-0.1cm}\scriptsize [Mbps]}} & {\shortstack[l]{{\hspace{-0.15cm}\scriptsize{$R_{\text{mean}}$}}\\ \hspace{-0.1cm}\scriptsize [Mbps]}} &{\shortstack[l]{{\hspace{0.1cm}\scriptsize{$U_{\text{E}}$}}\\ \hspace{0.1cm}\scriptsize [1]}}& {\shortstack[l]{{\hspace{-0.05cm}\scriptsize{$E_{\text{avg}}$}}\\ \hspace{-0.05cm}\scriptsize [Lux]}}&
{\shortstack[l]{{\hspace{-0.1cm}\scriptsize{$R_{\text{min}}$}}\\ \hspace{-0.1cm}\scriptsize [Mbps]}} & {\shortstack[l]{{\hspace{-0.15cm}\scriptsize{$R_{\text{mean}}$}}\\ \hspace{-0.1cm}\scriptsize [Mbps]}} &{\shortstack[l]{{\hspace{0.1cm}\scriptsize{$U_{\text{E}}$}}\\ \hspace{0.1cm}\scriptsize [1]}}& {\shortstack[l]{{\hspace{-0.05cm}\scriptsize{$E_{\text{avg}}$}}\\ \hspace{-0.05cm}\scriptsize [Lux]}}   \\

\hline
 \multirow{2}{*}{~10} &\small{$P_{ \text{LED}}=40\rm W$} & \multirow{1}{*}{\cellcolor[HTML]{FFD1DC} 0} & \cellcolor[HTML]{FFFFED} 25.3  & \cellcolor[HTML]{AAF0D1} ~0 & \cellcolor[HTML]{A4DDED} 67 & \cellcolor[HTML]{FFD1DC} 7.7 & \cellcolor[HTML]{FFFFED} 24.3 & \cellcolor[HTML]{AAF0D1} 0.25 & \cellcolor[HTML]{A4DDED} 194 & \cellcolor[HTML]{FFD1DC} 9.1 & \cellcolor[HTML]{FFFFED} 29.0 & \cellcolor[HTML]{AAF0D1} 0.16 & \cellcolor[HTML]{A4DDED} 164\\

\cline{2-14}
 & \multirow{1}{*}{\small{$P_{\text{tot}}=1 \rm KW$}} & \multirow{1}{*}{\cellcolor[HTML]{FFD1DC} 0} & \cellcolor[HTML]{FFFFED} 46.1  & \cellcolor[HTML]{AAF0D1} ~0 & \cellcolor[HTML]{A4DDED} 559 & \cellcolor[HTML]{FFD1DC} 8.1 & \cellcolor[HTML]{FFFFED} 26.7 & \cellcolor[HTML]{AAF0D1} 0.25 & \cellcolor[HTML]{A4DDED} 346 & \cellcolor[HTML]{FFD1DC} 10.5 & \cellcolor[HTML]{FFFFED} 35.1 & \cellcolor[HTML]{AAF0D1} 0.16 & \cellcolor[HTML]{A4DDED} 512 \\

 \hline
\hline
\multirow{2}{*}{~~3} &\small{$P_{\text{LED}}=40\rm W$} & \multirow{1}{*}{\cellcolor[HTML]{FFD1DC} 0} & \cellcolor[HTML]{FFFFED} 50.9  & \cellcolor[HTML]{AAF0D1} 0.57 & \cellcolor[HTML]{A4DDED} 356 & \cellcolor[HTML]{FFD1DC} 7.3 & \cellcolor[HTML]{FFFFED} 25.8 & \cellcolor[HTML]{AAF0D1} 0.72 &\cellcolor[HTML]{A4DDED} 730 & \cellcolor[HTML]{FFD1DC} 10.1 & \cellcolor[HTML]{FFFFED} 42.4 &\cellcolor[HTML]{AAF0D1} 0.60 & \cellcolor[HTML]{A4DDED} 516\\

\cline{2-14}
 & \multirow{1}{*}{\small{$P_{\text{tot}}=1 \rm KW$}} & \multirow{1}{*}{\cellcolor[HTML]{FFD1DC} 0} & \cellcolor[HTML]{FFFFED} 54.7  & \cellcolor[HTML]{AAF0D1} 0.57 & \cellcolor[HTML]{A4DDED} 495 & \cellcolor[HTML]{FFD1DC} 7.3 & \cellcolor[HTML]{FFFFED} 24.0 & \cellcolor[HTML]{AAF0D1} 0.72 & \cellcolor[HTML]{A4DDED} 435 & \cellcolor[HTML]{FFD1DC} 10.1 & \cellcolor[HTML]{FFFFED} 42.8 &\cellcolor[HTML]{AAF0D1} 0.60 & \cellcolor[HTML]{A4DDED} 538 \\

 \hline
\hline
\multirow{2}{*}{~~2} &\small{$P_{\text{LED}}=40\rm W$} & \multirow{1}{*}{\cellcolor[HTML]{FFD1DC} 0} & \cellcolor[HTML]{FFFFED} 52.8  & \cellcolor[HTML]{AAF0D1} 0.61 & \cellcolor[HTML]{A4DDED} 694 & \cellcolor[HTML]{FFD1DC} 7.5& \cellcolor[HTML]{FFFFED} 33.7 & \cellcolor[HTML]{AAF0D1} 0.74 & \cellcolor[HTML]{A4DDED} 1211  & \cellcolor[HTML]{FFD1DC} 10.0 & \cellcolor[HTML]{FFFFED} 42.2 & \cellcolor[HTML]{AAF0D1} 0.66 & \cellcolor[HTML]{A4DDED} 989\\

\cline{2-14}
 & \multirow{1}{*}{\small{$P_{\text{tot}}=1 \rm KW$}} & \multirow{1}{*}{\cellcolor[HTML]{FFD1DC} 0} & \cellcolor[HTML]{FFFFED} 49.1  & \cellcolor[HTML]{AAF0D1} 0.61 & \cellcolor[HTML]{A4DDED} 495 & \cellcolor[HTML]{FFD1DC} 7.4 & \cellcolor[HTML]{FFFFED} 28.2 & \cellcolor[HTML]{AAF0D1} 0.74 & \cellcolor[HTML]{A4DDED} 452 & \cellcolor[HTML]{FFD1DC} 9.9 & \cellcolor[HTML]{FFFFED} 37.1 & \cellcolor[HTML]{AAF0D1} 0.66 &  \cellcolor[HTML]{A4DDED} 515 \\
\hline

\end{tabular}
\vspace{-3mm}
\end{table*}

\pagestyle{empty}

\vspace{-2mm}
\section{Conclusions}
\label{sec:7}
\vspace{-0.5mm}

For the massive adoption of VLC technology, a process for the detailed planning of the locations of LED-based APs in a floor plan is necessary, to ensure that LoS connectivity can be established in optical wireless access and optical wireless backhaul.
To address this problem, a graph that models the common visibility areas of a floor plan partition was first proposed and, after that, two algorithms known as Maximal Clique Clustering~(MCC) and Connectivity Tree Construction~(CTC) were derived to minimize the number of VLC APs that are required for LoS condition in both access and backhaul links. The numbers of VLC APs identified by MCC and CTC were very similar, facilitating the ultra-dense deployment of a VLC network indoors using the same optical wireless technology for both access and backhauling. Aside from coverage, minimizing the number of APs using MCC also controls co-channel interference in the VLC network. Then, the mean data rate is maximized while the minimum data rate around the cell edges remains high enough. Additionally, MCC ensures that illumination comfort is achieved, which is linked to energy efficiency KPIs that smart buildings also aim to optimize.

\pagestyle{empty}

\vspace{0mm}  
\section*{APPENDIX: Properties of PV Graph modeling}
\vspace{0mm}
\subsection{Proof of Lemma \ref{lemma:non-empty-visibility}: non-empty visibility area }
As in Fig. \ref{fig:obstacles}(a), let $O$ and $r_0$ be the center point and the radius of the minimum enclosing circle of $p$, respectively, with vertices $X_1$, $X_2$, and $X_3$.  
  Since $O$ falls inside $p$ or on a side, no layout edge intersects the segments $OX_1$, $OX_2$, or $OX_3$ based on the definition of hyper triangulation. Thus, we only require to show that $OX_i\leq r$ holds for $i=1,2$ and $3$. Without loss of generality, we assume that $X_2X_3$ is the largest side of $p$. Thus, (i): $\|X_2X_3\|\leq R\leq \sqrt{3}r$. 
Furthermore, from $\widehat{X}_1\geq \pi/3$, we conclude that $2\pi/3\leq\widehat{O}\leq\pi$.  On the other hand, $X_2X_3$ is a chord of the minimum enclosing circle; thus, (ii): $\|X_2X_3\|=2r_0\sin{(\widehat{O}/2)}\geq\sqrt{3}r_0$. So from (i), (ii), and the definition of the minimum enclosing circle, we derive $OX_i\leq r_0\leq r$ for all $i$. Therefore, at least $O \in \mathcal{V}(p)$ and Lemma \ref{lemma:non-empty-visibility} follows.

\vspace{-2mm}
\subsection{Proof of Lemma \ref{lemma:Covering-visibility}: self-coverage of property}


In Fig.~\ref{fig:obstacles}(b), let us assume that $Q$ is an arbitrary point in $p$, $X_2X_3$ is the largest side and the line passing through $X_1$, and that $Q$ crosses $X_2X_3$ in $U$. Since no layout edge intersects $QX_1$, $QX_2$, or $QX_3$, it is enough to prove $\|QX_i\|\leq r$ for all $i$. Then, from \mbox{$\max (\widehat{X}_2,\widehat{X}_3) \leq \widehat{X}_1\leq \widehat{Q}$}, we conclude that $\max(\|QX_2\|,\|QX_3\|)\leq\|X_2X_3\|\leq r$. Furthermore, we have $\|QX_1\|\leq \|UX_1\|\leq \max(\|X_1X_3\|,\|X_1X_2\|)\leq\|X_2X_3\|\leq r$. As a result, $Q\in\mathcal{V}(p)$ and Lemma \ref{lemma:Covering-visibility} follows.

\subsection{Proof of Lemma \ref{Lemma:visibility_to_inside_triangle}: Full coverage property}
\begin{figure}[!t]
\centering
\hspace{0cm}\includegraphics[width=8cm]{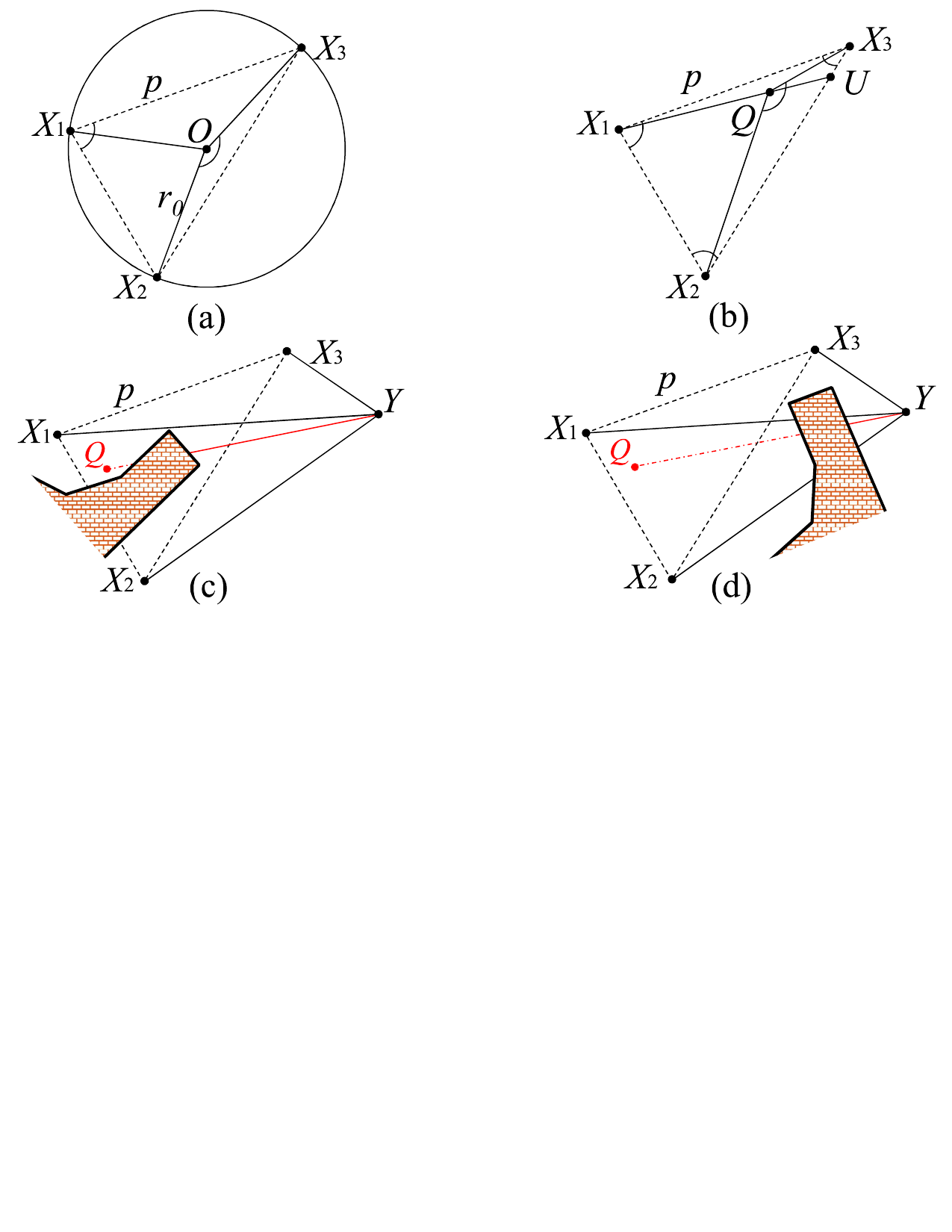}
\vspace{-5.5cm}\caption{\   Illustrations used in the proofs of Lemmas related to properties of hyper triangulation process. }\label{fig:obstacles}
\vspace{-0.5 cm}
\end{figure}
Generally, segment $YQ$ lies inside the convex hull of the points $X_1$, $X_2$, $X_3$, and $Y$. Firstly, no layout edge crosses $YQ$. Otherwise, the layout edge crosses a boundary in the convex hull, i.e., one of the triangle sides as in Fig.~\ref{fig:obstacles}(c) or $YX_i$ for some $i$, as in Fig.~\ref{fig:obstacles}(d). The former contradicts the definition of hyper triangulation, and the latter contradicts $Y\in \mathcal{V}(p)$. Secondly, the farthermost point of a triangle to a point on the plane is one of the vertices. As a result, $\| YQ \|\leq \max( \| YX_1 \|,\| YX_2 \|, \| YX_3 \|) \leq r$, Thus, Lemma \ref{Lemma:visibility_to_inside_triangle} follows.

\vspace{-1mm}
\subsection{Proof of Theorem \ref{theorem:lowerbound-by-pvgraph}: Lower bound property}

Firstly, since there found a hidden set of $t_{\max}$ points in the layout, we have (i): $s_{\max}\geq t_{\max}$. Secondly, considering the largest hidden set of $s_{\max}$ points $U_1^{'}$, $U_2^{'}$, ..., $U_{s_{\max}}^{'}$ in the layout, Lemma \ref{Lemma:visibility_to_inside_triangle} implies that
 $\mathcal{V}(q_1^{'}), \mathcal{V}(q_2^{'}),..., \mathcal{V}(q_{s_{\max}}^{'})$ are all pairwise disjoint, where   $q_1^{'}$, $q_2^{'}$, \dots, $q_{s_{\max}}^{'}$ denote the set of  triangles that contain $U_1^{'}$, $U_2^{'}$, \dots, $U_{s_{\max}}^{'}$, respectively.
 Besides, any hyper triangulation from the space $\mathcal{HT}(R\leq 2r)$ ensures that any set of hidden points in the layout falls inside distinct triangles.
 Therefore,
$q_1^{'}$, $q_2^{'}$, \dots, $q_{s_{\max}}^{'}$ form an independent set of $s_{\max}$ nodes in the PV graph, thus (ii): $s_{\max}\leq t_{\max}$.
Finally, from (i) and (ii), we have $s_{\max}= t_{\max}$. \vspace{-1mm}



\bibliography{main.bib}

\begin{thebibliography}{10}

\bibitem{rapp2019}
T.~S. Rappaport, Y.~Xing, O.~Kanhere, S.~Ju, A.~Madanayake, S.~Mandal,
  A.~Alkhateeb, and G.~C. Trichopoulos, ``Wireless communications and
  applications above 100 {GH}z: Opportunities and challenges for 6{G} and
  beyond,'' {\em IEEE Access}, vol.~7, pp.~78729--78757, June 2019.

\bibitem{saad2020}
W.~Saad, M.~Bennis, and M.~Chen, ``A vision of 6{G} wireless systems:
  Applications, trends, technologies, and open research problems,'' {\em IEEE
  Netw.}, vol.~34, pp.~134--142, Oct. 2020.

\bibitem{zhang2022mmwave}
H.~Zhang, Y.~Zhang, J.~Cosmas, N.~Jawad, W.~Li, R.~Muller, and T.~Jiang,
  ``mm{Wave} indoor channel measurement campaign for 5{G} new radio indoor
  broadcasting,'' {\em IEEE Trans. Broadcast.}, vol.~68, pp.~331--344, Jan.
  2022.

\bibitem{moon20226g}
S.-R. Moon, E.-S. Kim, M.~Sung, H.~Y. Rha, E.~S. Lee, I.-M. Lee, K.~H. Park,
  J.~K. Lee, and S.-H. Cho, ``6{G} indoor network enabled by photonics- and
  electronics-based sub-{TH}z technology,'' {\em J Lightw. Technol}, vol.~40,
  pp.~499--510, Jan. 2022.

\bibitem{obeed2019optimizing}
M.~Obeed, A.~M. Salhab, M.-S. Alouini, and S.~A. Zummo, ``On optimizing {VLC}
  networks for downlink multi-user transmission: A survey,'' {\em IEEE Commun.
  Surveys Tuts.}, vol.~21, pp.~2947--2976, Mar. 2019.

\bibitem{yadav2018all}
A.~Yadav and O.~A. Dobre, ``All technologies work together for good: A glance
  at future mobile networks,'' {\em IEEE Wireless Commun.}, vol.~25,
  pp.~10--16, Aug. 2018.

\bibitem{dowh2020a}
A.~Dowhuszko, M.~Ilter, and J.~Hämäläinen, ``Visible light communication
  system in presence of indirect lighting and illumination constraints,'' in
  {\em Proc. IEEE Int. Conf. Commun.}, pp.~1--6, June 2020.

\bibitem{matheus2019visible}
L.~E.~M. Matheus, A.~B. Vieira, L.~F.~M. Vieira, M.~A.~M. Vieira, and
  O.~Gnawali, ``Visible light communication: Concepts, applications and
  challenges,'' {\em IEEE Commun. Surveys Tuts.}, vol.~21, pp.~3204--3237, Apr.
  2019.

\bibitem{tabassum2018coverage}
H.~Tabassum and E.~Hossain, ``Coverage and rate analysis for co-existing
  {RF/VLC} downlink cellular networks,'' {\em IEEE Trans. Wireless Commun.},
  vol.~17, pp.~2588--2601, Feb. 2018.

\bibitem{borj2020}
B.~G. Guzmán, A.~A. Dowhuszko, V.~P.~G. Jiménez, and A.~I. Pérez-Neira,
  ``Resource allocation for cooperative transmission in optical wireless
  cellular networks with illumination requirements,'' {\em IEEE Trans.
  Commun.}, vol.~68, pp.~6440--6455, Jul. 2020.

\bibitem{abedi2021visible}
M.~Abedi, A.~A. Dowhuszko, and R.~Wichman, ``Visible light communications: A
  novel indoor network planning approach,'' in {\em IEEE Glob. Commun. Conf.},
  pp.~1--7, Dec. 2021.

\bibitem{alzenad2018fso}
M.~Alzenad, M.~Z. Shakir, H.~Yanikomeroglu, and M.-S. Alouini, ``{FSO}-based
  vertical backhaul/fronthaul framework for 5{G}+ wireless networks,'' {\em
  IEEE Commun. Mag.}, vol.~56, pp.~218--224, Jan. 2018.

\bibitem{ni2013indoor}
W.~Ni, R.~P. Liu, I.~B. Collings, and X.~Wang, ``Indoor cooperative small cells
  over ethernet,'' {\em IEEE Commun. Mag.}, vol.~51, pp.~100--107, Sep. 2013.

\bibitem{kazemi2018wireless}
H.~Kazemi, M.~Safari, and H.~Haas, ``A wireless optical backhaul solution for
  optical attocell networks,'' {\em IEEE Trans. Wireless Commun.}, vol.~18,
  pp.~807--823, Dec. 2019.

\bibitem{yaacoub2021security}
J.~P.~A. Yaacoub, J.~H. Fernandez, H.~N. Noura, and A.~Chehab, ``Security of
  power line communication systems: issues, limitations and existing
  solutions,'' {\em Comput. Sci. Rev.}, vol.~39, p.~100331, Feb. 2021.

\bibitem{de1997computational}
M.~De~Berg, M.~Van~Kreveld, M.~Overmars, and O.~Schwarzkopf, ``Computational
  geometry,'' in {\em Computational geometry}, pp.~1--17, Springer, 1997.

\bibitem{amit2010locating}
Y.~Amit, J.~S. Mitchell, and E.~Packer, ``Locating guards for visibility
  coverage of polygons,'' {\em Int. J. Comput. Geom. Appl.}, vol.~20,
  pp.~601--630, Oct. 2010.

\bibitem{efrat2006guarding}
A.~Efrat and S.~Har-Peled, ``Guarding galleries and terrains,'' {\em Inf.
  Process. Lett.}, vol.~100, pp.~238--245, Dec. 2006.

\bibitem{ghassemlooy2019optical}
Z.~Ghassemlooy, W.~Popoola, and S.~Rajbhandari, {\em Optical wireless
  communications: system and channel modelling with Matlab{\textregistered}}.
\newblock CRC press, Apr. 2019.

\bibitem{karunatilaka2015led}
D.~Karunatilaka, F.~Zafar, V.~Kalavally, and R.~Parthiban, ``{LED} based indoor
  visible light communications: {S}tate of the art,'' {\em IEEE Commun. Surveys
  Tuts.}, vol.~17, pp.~1649--1678, 3Q 2015.

\bibitem{boucouvalas2015standards}
A.~Boucouvalas, P.~Chatzimisios, Z.~Ghassemlooy, M.~Uysal, and K.~Yiannopoulos,
  ``Standards for indoor optical wireless communications,'' {\em IEEE Commun.
  Mag.}, vol.~53, pp.~24--31, Mar. 2015.

\bibitem{dowh2017}
A.~{Dowhuszko} and A.~{P\'erez-Neira}, ``Achievable data rate of coordinated
  multi-point transmission for visible light communications,'' in {\em Proc.
  IEEE Int. Symp. Personal Indoor and Mobile Radio Commun.}, pp.~1--7, Oct.
  2017.

\bibitem{Smith1971}
J.~G. Smith, ``{The information capacity of amplitude- and variance-constrained
  scalar gaussian channels},'' {\em Information and Control}, vol.~18, no.~3,
  pp.~203--219, 1971.

\bibitem{Lapidoth2009}
A.~Lapidoth, S.~M. Moser, and M.~A. Wigger, ``{On the capacity of free-space
  optical intensity channels},'' {\em IEEE Transactions on Information Theory},
  vol.~55, no.~10, pp.~4449--4461, 2009.

\bibitem{Ma2021}
S.~Ma, R.~Yang, Y.~He, S.~Lu, F.~Zhou, N.~Al-Dhahir, and S.~Li, ``{Achieving
  Channel Capacity of Visible Light Communication},'' {\em IEEE Systems
  Journal}, vol.~15, no.~2, 2021.

\bibitem{schu2006}
E.~Schubert, {\em Light-Emitting Diodes}.
\newblock Cambridge University Press, 2nd~ed., 2006.

\bibitem{paalsson2008camera}
M.~P{\aa}lsson and J.~St{\aa}hl, {\em The camera placement problem: An art
  gallery problem variation}.
\newblock Department of Computer Science, Lund University, Feb 2008.

\bibitem{chazelle1994decomposition}
B.~Chazelle and L.~Palios, ``Decomposition algorithms in geometry,'' in {\em
  Algebr. Geom. Appl.}, pp.~419--447, Springer, 1994.

\bibitem{godsil2001algebraic}
C.~Godsil and G.~F. Royle, {\em Algebraic graph theory}, vol.~207.
\newblock Springer Science \& Business Media, Apr. 2001.

\bibitem{gewali2015constructing}
L.~P. Gewali and P.~Hada, ``Constructing 2d shapes by inward denting,'' in {\em
  2015 12th International Conference on Information Technology-New
  Generations}, pp.~708--713, IEEE, 2015.

\bibitem{ITU2017}
ITU, ``Minimum requirements related to technical performance for {IMT}-2020
  radio interface(s),'' tech. rep., ITU-R M.2410-0, 2017.

\end{thebibliography}
\bibliographystyle{ieeetr}

\end{document}